\definecolor{mygreen}{rgb}{0,0.6,0}
\definecolor{mygray}{rgb}{0.5,0.5,0.5}
\definecolor{mymauve}{rgb}{0.58,0,0.82}
\definecolor{myorange}{HTML}{d15115}
\definecolor{teal}{HTML}{23ad90}
\footnotesize\color{mygray}, 
\DeclareMathAlphabet{\pazocal}{OMS}{zplm}{m}{n}
\newtheorem{theore}{Theorem}[section]
\newtheorem{cor}[theore]{Corollary}
\newtheorem{lem}[theore]{Lemma}
\newenvironment{lemma}
{%
\pushQED{\qed}\begin{lem}}
{\popQED\end{lem}}
\newtheorem{pro}[theore]{Proposition}
\newenvironment{proposition}
{%
\pushQED{\qed}\begin{pro}}
{\popQED\end{pro}}
\theoremstyle{definition}
\newtheorem{defi}{Definition}[section]
\newtheorem{exa}{Example}[section]
\newenvironment{example}
{%
\pushQED{\qed}\begin{exa}}
{\popQED\end{exa}}
\theoremstyle{remark}
\newtheorem{rem}{Remark}
\newenvironment{remark}
{%
\pushQED{\qed}\begin{rem}}
{\popQED\end{rem}}
\tikzset{
place/.style={
circle,
thick,
minimum size=6mm,
draw
},
transitionV/.style={
rectangle,
thick,
fill=black,
minimum height=6mm,
inner xsep=1pt
}
}
\definecolor{myblue}{RGB}{0, 101, 202} 
\definecolor{myred}{RGB}{197, 14, 31} 
\renewcommand*{\backref}[1]{}
\renewcommand*{\backrefalt}[4]{%
    \ifcase #1 (Not cited.)%
    \or        (Cited on page~#2.)%
    \else      (Cited on pages~#2.)%
    \fi}
\title{
    The Hadamard product, its residual, and its dual residual in the dioid of counters: algorithms and implementation in \CCtitle
}
\author{Davide Zorzenon, Germano Schafaschek,\\Dominik Tirpák, Soraia Moradi,\\Laurent Hardouin, and J\"{o}rg Raisch%
\thanks{D. Zorzenon, G. Schafaschek, D. Tirpák, S. Moradi, and J. Raisch are with Technische Universit\"at Berlin, Control Systems Group, Einsteinufer 17, D-10587 Berlin, Germany (e-mail: [zorzenon,schafaschek,moradi,raisch]@control.tu-berlin.de, dominik.tirpak@campus.tu-berlin.de).
J. Raisch is also with Science of Intelligence, Research Cluster of Excellence, Berlin, Germany.
L. Hardouin is with Laboratoire Angevin de Recherche en Ing\'{e}nierie des Syst\`{e}mes, Polytech Angers, Universit\'{e} d’Angers, France (e-mail: laurent.hardouin@univ-angers.fr).}
}
\begin{document}

\newcommand{\minmaxgd}{\mathcal{M}_{in}^{ax}[\![\gamma,\delta]\!]}
\newcommand{\D}{\pazocal{D}}
\newcommand{\CCC}{\pazocal{C}}
\newcommand{\X}{\pazocal{X}}
\newcommand{\N}{\mathbb{N}}
\newcommand{\No}{\mathbb{N}_0}
\newcommand{\Z}{\mathbb{Z}}
\newcommand{\Zmax}{{\Z}_{\normalfont\fontsize{7pt}{11pt}\selectfont\mbox{max}}}
\newcommand{\Zmin}{{\Z}_{\normalfont\fontsize{7pt}{11pt}\selectfont\mbox{min}}}
\newcommand{\Zbar}{\overline{\Z}}
\newcommand{\Zminbar}{\overline{\Z}_{\normalfont\fontsize{7pt}{11pt}\selectfont\mbox{min}}}
\newcommand{\Zmaxbar}{\overline{\Z}_{\normalfont\fontsize{7pt}{11pt}\selectfont\mbox{max}}}
\newcommand{\R}{\mathbb{R}}
\newcommand{\Rmax}{{\R}_{\normalfont\fontsize{7pt}{11pt}\selectfont\mbox{max}}}
\newcommand{\Rmin}{{\R}_{\normalfont\fontsize{7pt}{11pt}\selectfont\mbox{min}}}
\newcommand{\Rbar}{\overline{\R}}
\newcommand{\floor}[1]{\left\lfloor#1\right\rfloor}
\newcommand{\ceil}[1]{\left\lceil#1\right\rceil}
\newcommand\myOverwrite[2]{\!\makebox[0cm][l]{#1}#2\ \!}
\newcommand{\ldiv}{\textup{\hspace{0.85mm}\myOverwrite{$\circ$}{$\backslash$}\hspace{-0.5mm}}}
\newcommand{\rdiv}{\textup{\hspace{0.85mm}\myOverwrite{$\circ$}{$/$}\hspace{-0.5mm}}}
\newcommand{\CC}{C\nolinebreak\hspace{-.05em}\raisebox{.4ex}{\tiny\bf +}\nolinebreak\hspace{-.10em}\raisebox{.4ex}{\tiny\bf +}}
\newcommand{\CCtitle}{C\nolinebreak\hspace{-.05em}\raisebox{.4ex}{\small\bf +}\nolinebreak\hspace{-.10em}\raisebox{.4ex}{\small\bf +}}

\date{}
\maketitle

\begin{abstract}
This report presents the algorithms for computing the Hadamard product, its residual, and its dual residual between formal power series in the dioid of counters (which is isomorphic to $\minmaxgd$, see~\cite{cohen1993two}).
The algorithms have been implemented in the \CC~toolbox ETVO ((Event|Time)-Variant Operators)~\cite{cottenceau2020c++}, which, in turn, is based on the toolbox MinMaxgd~\cite{hardouin2013minmaxgd}.
In this report, after the preliminaries (Section~\ref{se:1}) and the definition of the Hadamard product and its residuals (Section~\ref{se:2}), we present the algorithms and the proofs of their correctness (Section~\ref{se:3}).
Sections~\ref{se:1} and~\ref{se:2} are taken from~\cite{zorzenon2022implementation}, as well as the introduction of Section~\ref{se:3}.
The last part of the report (Section~\ref{se:4}) constitutes a user guide for the \CC~implementation of the algorithms in ETVO, which is available at~\cite{Hadamardproductcounters}.

\vspace{10pt}
\noindent
The algorithms described in this report were found by S. Moradi before 2019, under the supervision of L. Hardouin and J. Raisch.
The proofs of their correctness were derived in the current form by D. Zorzenon in 2022 with the help of G. Schafaschek and D. Tirpák, on the basis of the previous work of S. Moradi.
The algorithms were implemented in \CC~by D. Zorzenon in 2020, and the implementation was improved by D. Tirpák in 2021 under the supervision of G. Schafaschek and D. Zorzenon.
\end{abstract}

\section{Preliminaries}\label{se:1}

\subsection{Dioid theory}

A dioid (or idempotent semiring) $(\D,\oplus,\otimes)$ is a set $\D$ equipped with two binary operations, $\oplus$ and $\otimes$, called respectively addition and multiplication, having the following properties.
Addition is commutative, associative, idempotent (i.e., $a\oplus a = a$ $\forall  a\in\D$), and admits neutral (or zero) element $\varepsilon$; multiplication is associative, distributes over addition, admits neutral (or unit) element $e$, and $\varepsilon$ is absorbing for multiplication (i.e., $a\otimes\varepsilon=\varepsilon\otimes a = \varepsilon$ $\forall a\in\D$).
As in standard algebra, the multiplication symbol "$\otimes$" will be often omitted.
Operation $\oplus$ induces an order relation $\preceq$, defined by $a\preceq b\ \Leftrightarrow \ a\oplus b = b$.

A dioid is complete if it is closed for infinite sums and if multiplication distributes over infinite sums, i.e., $a\otimes\left(\bigoplus_{x\in\X}x\right)= \left(\bigoplus_{x\in\X}a\otimes x\right)$, and $\left(\bigoplus_{x\in\X}x\right)\otimes a= \left(\bigoplus_{x\in\X}x\otimes a\right)$ for all $a\in\D$, $\X\subseteq \D$.
Let $(\D,\oplus,\otimes)$ be a complete dioid.
Its top element is defined by $\top=\bigoplus_{x\in\D}x$.
The greatest lower bound $\wedge$ is defined, for all $a,b\in\D$, by $a\wedge b = \bigoplus_{\D_{ab}} x$, where $\D_{ab} = \{x\in\D\ |\ x\preceq a,\ x\preceq b\}$.
Operation $\wedge$ is commutative, associative, idempotent, and admits $\top$ as neutral element.
Moreover, in a complete dioid $(\D,\oplus,\otimes)$, the Kleene star operator $ ^*$ applied to $a\in\D$ yields $a^*=\bigoplus_{k\in\Z,k\geq 0} a^k$, where $a^0 = e$, and $a^{k+1}=a\otimes a^k$ for all $k\geq 0$.

\begin{remark}\label{rem:oplus_property}
The following equivalence holds: $\forall a,b,c\in\D$, 
$
    a\succeq b \mbox{ and }a\succeq c \ \Leftrightarrow \ a\succeq b\oplus c.
$
Indeed, ($\Leftarrow$) comes from $b\oplus c\succeq b$, $b\oplus c\succeq c$.
($\Rightarrow$) comes from: $a\succeq b\Leftrightarrow a\oplus b = a$, $a\succeq c\Leftrightarrow a\oplus c = a$; therefore, $a\oplus (b\oplus c) = (a\oplus b ) \oplus c = a \oplus c = a$, which is equivalent to $a \succeq b\oplus c$.
Analogously, it is possible to show that: $
    a\preceq b \mbox{ and }a \preceq c \ \Leftrightarrow \ a\preceq b\wedge c.
$
\end{remark}

As in standard algebra, operations $\oplus$ and $\otimes$ can be extended to matrices as follows: for all $A,B\in\D^{m\times n}$ and $C\in\D^{n\times p}$, $A\oplus B\in\D^{m\times n}$ and $A\otimes C\in\D^{m\times p}$ are defined by
\[
    (A\oplus B)_{ij} = A_{ij}\oplus B_{ij},\quad (A\otimes C)_{ij} = \bigoplus_{k=1}^{n} A_{ik}\otimes C_{kj}.
\]
If $(\D,\oplus,\otimes)$ is a complete dioid, then $(\D^{n\times n},\oplus,\otimes)$, where $\oplus$ and $\otimes$ are extended as above, is also a complete dioid.
Its zero (resp. top) element is the $n\times n$-matrix with all entries equal to $\varepsilon$ (resp. $\top$), and its unit element is the $n\times n$-matrix with $e$'s on the main diagonal and $\varepsilon$'s elsewhere.

\begin{example}
An example of complete dioid is the set $\Zbar = \Z\cup\{-\infty,+\infty\}$, with the standard minimum operation as $\oplus$ and standard addition as $\otimes$.
With this notation, the complete dioid $\Zminbar\coloneqq (\Zbar,\oplus,\otimes)$ is called the \emph{min-plus algebra}.
In $\Zminbar$, $\varepsilon=+\infty$, $e=0$, $\top=-\infty$, $\wedge$ corresponds to the standard maximum operation, and $\preceq$ corresponds to the standard $\geq$; this means that the order $\preceq$ is reversed with respect to the conventional one (e.g., $5\preceq 2$).
The dual dioid of $\Zminbar$, denoted $\Zmaxbar$, corresponds to the set $\Zbar$ with the standard maximum operation as $\oplus$ and standard addition as $\otimes$; observe that the order in $\Zmaxbar$ coincides with the standard one.
Due to the absorbing property of $\varepsilon$, the result of $-\infty\otimes +\infty=+\infty\otimes -\infty$ is different in $\Zminbar$ and $\Zmaxbar$.
\end{example}

A mapping $\Pi:\D\rightarrow \CCC$, where $(\D,\oplus,\otimes)$ and $(\CCC,\oplus,\otimes)$ are two dioids, is \emph{isotone} or \emph{non-decreasing} (resp. \emph{antitone} or \emph{non-increasing}) if $\forall a,b\in\D$, $a\preceq b\Rightarrow \Pi(a)\preceq \Pi(b)$ (resp. $\Pi(a)\succeq \Pi(b)$).

\begin{example}\label{ex:counters}
Another example of a complete dioid is the algebra of counters.
Let $s:\Zmaxbar\rightarrow \Zminbar$, $t\mapsto s(t)$, be an antitone mapping such that\footnote{The importance of the end-point conditions on $s$ is explained in~\cite[Chapter~5]{baccelli1992synchronization}.} $s(-\infty)=-\infty$ and $s(+\infty)=+\infty$.
(Note that, due to the reversed order of $\Zminbar$, such mappings are non-decreasing in the standard sense.) 
This kind of mappings can be used to represent the cumulative number $s(t)$ of firings of a transition in a TEG up to and including time $t$.
The \emph{$\delta$-transform} of $s$, called \emph{counter}, is the non-increasing formal power series in $\delta$ with coefficients $s(t)$ in $\Zminbar$ and exponents $t$ in $\Zmaxbar$, defined by 
\[
    s = \bigoplus_{t\in\Zbar} s(t)\delta^t.
\]
As no ambiguity will occur, we indicate both the mapping and its $\delta$-transform by the same symbol.
Since counters are non-increasing and such that $s(-\infty)=-\infty$, $s(+\infty)=+\infty$, we can represent them compactly by omitting terms $-\infty\delta^{-\infty}$, $+\infty\delta^{+\infty}$, and all terms $s(t)\delta^t$ such that $s(t)=s(t+1)$.
For instance, 
\[
    -\infty\delta^{-\infty}\oplus\bigoplus_{-\infty<t\leq 1}-2\delta^t \oplus \bigoplus_{2\leq t\leq 5} 3\delta^t \oplus \bigoplus_{t\geq 6}+\infty\delta^t
\]
will be simply denoted $-2\delta^1\oplus 3\delta^5$.
Thus, we will often avoid mentioning the coefficients of counters for $\delta$-exponents equal to $\pm\infty$, implicitly assuming that the end-point conditions hold.
The set of counters, denoted $\Sigma$, equipped with operations $\oplus$ and $\otimes$ defined, $\forall t\in\Z$, by
\[
    (s\oplus s')(t) = s(t)\oplus s'(t),\quad (s\otimes s')(t) = \bigoplus_{\tau\in\Z}s(\tau)\otimes s'(t-\tau)
\]
is a complete dioid, where the zero, unit, and top element are, respectively, $s_\varepsilon = \bigoplus_{t\in\Z}+\infty\delta^t$, $s_e = e\delta^0$, and $s_\top = \bigoplus_{t\in\Z} -\infty\delta^t$.
Note that, given two counters $s,s'\in\Sigma$, $s\preceq s' \Leftrightarrow s(t)\preceq s'(t)$ for all $t\in\Z$, and their greatest lower bound is given, $\forall t\in\Z$, by $(s\wedge s')(t) = s(t)\wedge s'(t)$.
\end{example}

For algorithmic reasons, it is convenient to distinguish three increasingly larger classes of counters: monomials, of the form $n\delta^t$, polynomials, of the form $\bigoplus_{i=1}^m n_i\delta^{t_i}$ with $m>0$ (and the convention that $n_{i+1}>n_i$, $t_{i+1}>t_i$), and ultimately periodic series.
Series of the third kind are all those that can be written as $s = p\oplus qr^*$, where $p=\bigoplus_{i=1}^m n_i\delta^{t_i}$ is the transient part of $s$ and $q=\bigoplus_{i=1}^l N_i \delta^{T_i}$ is the periodic pattern of $s$, whose periodicity is described by the monomial $r=\nu\delta^\tau$.
For the sake of brevity, we will often write "periodic series" in place of "ultimately periodic series".
In general, the following proposition holds for periodic series.

\begin{proposition}\label{pr:periodic_series}
A formal power series $s$ (which is not necessarily a counter) is said to be periodic if there exist $T_1\in\Z$ (the beginning of the periodic regime), $\nu\in\Z$ (the number of units that $s$ gains after each period), and $\tau\geq 0$ (the period) such that, for all $t\geq T_1$, $k\geq 0$,
\[
    s(t+k\tau) = k\nu + s(t).
\]
If $s$ is a periodic counter, then it can be written as $s=p\oplus qr^*$, where $T_1$ is the $\delta$-exponent of the first monomial of $q$, and $r=\nu\delta^\tau$.
\end{proposition}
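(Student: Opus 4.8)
The plan is to work entirely with the coefficient function $t\mapsto s(t)$, since both the hypothesis $s(t+k\tau)=k\nu+s(t)$ and the operation $\oplus$ on counters are defined pointwise. The key observation is that, read as a counter, a monomial $N\delta^T$ has coefficient $N$ at every $t\le T$ and $\varepsilon$ at every $t>T$ (this is precisely the compaction convention of Example~\ref{ex:counters}); hence a normalized polynomial $\bigoplus_i N_i\delta^{T_i}$ is the finite staircase whose value on $(T_{i-1},T_i]$ is $N_i$. I would then construct $p$, $q$, $r$ explicitly and verify the identity $s=p\oplus qr^*$ by comparing coefficients at each $t\in\Z$, splitting the time axis at $T_1$.

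First I would fix the representation data. Put $r=\nu\delta^\tau$; since monomial multiplication adds coefficients and exponents, $r^k=(k\nu)\delta^{k\tau}$, so $r^*=\bigoplus_{k\ge 0}(k\nu)\delta^{k\tau}$. Let $T_1$ be the first jump time of $s$ in the periodic regime (the least $t$ at or beyond the onset with $s(t)\ne s(t+1)$), let $T_1<\dots<T_l$ be all jump times of $s$ in the window $[T_1,T_1+\tau)$, set $N_i\coloneqq s(T_i)$, and take $q=\bigoplus_{i=1}^l N_i\delta^{T_i}$; finally let $p$ collect the finitely many steps of $s$ strictly before $T_1$. Expanding, $qr^*=\bigoplus_{i,\,k\ge 0}(N_i+k\nu)\delta^{T_i+k\tau}$, whose coefficient at $t$ is $\bigoplus_{i,\,k\ge0:\ T_i+k\tau\ge t}(N_i+k\nu)$.

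The heart of the proof is to show $(qr^*)(t)=s(t)$ for all $t\ge T_1$. By the periodicity relation and $N_i=s(T_i)$, each pair $(T_i+k\tau,\,N_i+k\nu)$ equals $(T_i+k\tau,\,s(T_i+k\tau))$, and as $(i,k)$ ranges over $1\le i\le l$, $k\ge0$ these exponents enumerate exactly the jump times of $s$ in $[T_1,\infty)$, without repetition because the window has length $\tau$. Since $s$ is a non-decreasing counter, the value $s(T_i+k\tau)$ grows with the exponent, so the $\oplus$ (the minimum in $\Zminbar$) defining $(qr^*)(t)$ is attained at the smallest jump time $t^*\ge t$; and because $t^*$ is the first jump time at or after $t$, $s$ is constant on the block of times ending at $t^*$, which contains $t$, giving $(qr^*)(t)=s(t^*)=s(t)$. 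It then remains to take the meet with $p$. For $t\ge T_1$ every exponent of $p$ is $<T_1\le t$, so $p(t)=\varepsilon$ and $(p\oplus qr^*)(t)=(qr^*)(t)=s(t)$. For $t<T_1$ the polynomial $p$ reproduces $s$ on the lower steps while $(qr^*)(t)=N_1=s(T_1)$ fills the plateau immediately below $T_1$; since $s(t)\le s(T_1)$ by monotonicity and $\oplus$ is the pointwise minimum, $(p\oplus qr^*)(t)=s(t)$ in either case. This yields $s=p\oplus qr^*$ with $r=\nu\delta^\tau$ and $T_1$ the exponent of the first monomial of $q$.

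The step I expect to be the main obstacle is exactly the identity $(qr^*)(t)=s(t)$ for $t\ge T_1$: one must show that the infinite $\oplus$ is attained and equals $s(t)$, which rests on three facts that each use the counter hypothesis---that $\{T_i+k\tau\}$ enumerates the jumps of $s$ beyond $T_1$ without repetition, that the coefficients are ordered consistently with the exponents (monotonicity of $s$), and that $\nu>0$ (forced by $s(+\infty)=+\infty$) so that these jump times are unbounded and $t^*$ always exists. Two secondary subtleties deserve care: that the representation's $T_1$ is the first jump time of the periodic part rather than the given onset of periodicity---these genuinely differ when the periodic pattern begins with a plateau---and that $p$ is a bona fide (finite) polynomial only because a periodic counter in the sense used here has finitely many steps before $T_1$; the example $s(t)=t$ (periodic with $\nu=\tau$ but with an infinite transient, hence not of the form $p\oplus qr^*$) shows that this finiteness is a real hypothesis and not a consequence of the displayed relation alone.
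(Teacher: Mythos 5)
Your proposal cannot be measured against the paper's own proof, because the paper gives none: Proposition~\ref{pr:periodic_series} doubles as the \emph{definition} of periodicity, and the representation claim $s=p\oplus qr^*$ is stated as a known fact with no argument following it. So your construction stands or falls on its own. On its own merits, the core of it is sound and is the natural argument: build $q$ from the jump times $T_1<\dots<T_l$ of $s$ in a window of length $\tau$, use the periodicity relation to show that $\{T_i+k\tau\}$ enumerates all jumps beyond $T_1$ with coefficients $s(T_i)+k\nu$, and use monotonicity of $s$ to see that the pointwise minimum defining $(qr^*)(t)$ is attained at the first jump $t^*\geq t$, whence $(qr^*)(t)=s(t^*)=s(t)$. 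Your closing observation that $s(t)=t$ satisfies the displayed definition of periodicity yet admits no representation $p\oplus qr^*$ (any such series is constant for $t$ small enough, while $s(t)=t$ is unbounded below) is also correct, and it exposes a genuine implicit hypothesis --- finitely many jumps before $T_1$ --- that the paper leaves unstated.

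There is, however, one concrete flaw: the parenthetical claim that $\nu>0$ is \emph{forced} by $s(+\infty)=+\infty$ is false, and your existence of $t^*$ (hence the key identity) is load-bearing on it. The end-point condition fixes the value of $s$ at the \emph{point} $+\infty$, not a limit, so an eventually constant counter such as $1\delta^3\oplus 5\delta^{+\infty}$ (value $5$ at every finite $t\geq 4$, value $+\infty$ only at $t=+\infty$) is a periodic counter with $\nu=0$, $\tau=1$. For such an $s$ there are no jump times at or beyond the onset, so your $T_1$, your window $[T_1,T_1+\tau)$, and your $q$ are all undefined, and the proof collapses precisely on a class of series the paper relies on: Section~\ref{se:3} writes $1\delta^3\oplus 5\delta^{+\infty}=1\delta^3\oplus 5\delta^4(0\delta^1)^*$, which is exactly the $\nu=0$ representation your argument ought to produce. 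The repair is short but must be stated: for a counter taking finite values in its periodic regime, $\nu<0$ contradicts monotonicity, so besides $\nu>0$ the only case is $\nu=0$; there $s$ is constant, say $=c$, on all finite $t$ at or beyond the onset $T_1$, and one takes $q=c\delta^{T_1}$, $r=0\delta^1$, with $p$ the finite transient (a similar one-line treatment covers the degenerate counters that take the values $\pm\infty$ at finite times). With that case split added, your proof is complete.
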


When $s$ represents the cumulative firings of a transition in a TEG, the sequence of firings specified by $q$ repeats every $\tau$ time units and after $\nu$ firings of the corresponding transition. 
The ratio $\nu/\tau$ is called \emph{throughput}, and it represents the average number of firings of the transition per unit of time during the periodic regime.
The representation of ultimately periodic series in the form $p\oplus qr^*$ is not unique; however, every ultimately periodic series admits a unique canonical form, in which $m$ (i.e., the number of monomials in the transient part $p$) is minimal.
For example, the canonical form of series $0\delta^1\oplus1\delta^3\oplus(2\delta^6\oplus3\delta^8)(2\delta^5)^*$, graphically represented in Figure~\ref{fig:canonical_form}, is $(0\delta^1\oplus1\delta^3)(2\delta^5)^*$.

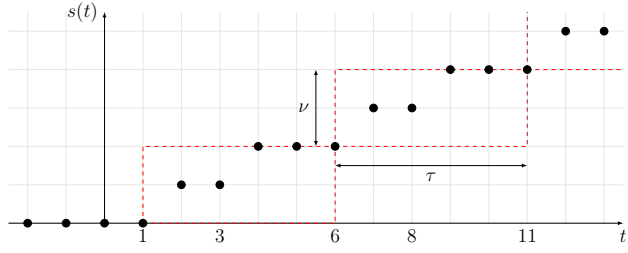
\begin{figure}
    \centering
    \resizebox{.7\linewidth}{!}{
        \begin{tikzpicture}
\Large
\draw[help lines,xstep=1,ystep=1,gray!20] (-2.5,0) grid (13.5,5.5);
\draw[-latex] (-2.5,0)--(13.5,0);
\draw[-latex] (0,0)--(0,5.5);
\node[anchor=north] at (13.5,0) {$t$};
\node[anchor=east] at (0,5.5) {$s(t)$};

\draw [red,dashed] (1,0) -- (1,2) -- (6,2) -- (6,0) -- cycle;
\draw [red,dashed] (6,2) -- (6,4) -- (11,4) -- (11,2) -- cycle;
\draw [red,dashed] (11,4) -- (11,5.5);
\draw [red,dashed] (11,4) -- (13.5,4);

\filldraw (-2,0) circle (.1);
\filldraw (-1,0) circle (.1);
\filldraw (0,0) circle (.1);
\filldraw (1,0) circle (.1);
\filldraw (2,1) circle (.1);
\filldraw (3,1) circle (.1);
\filldraw (4,2) circle (.1);
\filldraw (5,2) circle (.1);
\filldraw (6,2) circle (.1);
\filldraw (7,3) circle (.1);
\filldraw (8,3) circle (.1);
\filldraw (9,4) circle (.1);
\filldraw (10,4) circle (.1);
\filldraw (11,4) circle (.1);
\filldraw (12,5) circle (.1);
\filldraw (13,5) circle (.1);

\foreach \x in {1,3,6,8,11}{
\node[anchor=north] at (\x,0) {$\x$};
}

\draw [latex-latex] (6,1.5) -- node[below] {$\tau$} (11,1.5);
\draw [latex-latex] (5.5,2) -- node[left] {$\nu$} (5.5,4);


\end{tikzpicture}
    }
    \caption{Series $(0\delta^1\oplus1\delta^3)(2\delta^5)^*$.}
    \label{fig:canonical_form}
\end{figure}

\subsection{Residuation theory}\label{su:residuation_theory}

To solve control problems, it is often necessary to compute the inverse of a certain mapping.
When the mapping is not invertible, sometimes it is possible to find the best under- and over-approximation of its inverse, called respectively its \emph{residual} and \emph{dual residual}.

Let $(\D,\oplus,\otimes)$ and $(\CCC,\oplus,\otimes)$ be two complete dioids, and $\Pi:\D\rightarrow\CCC$ an isotone mapping.
The mapping $\Pi$ is \emph{residuated} (resp. \emph{dually residuated}) if, for all $y\in\CCC$, set $\{x\in\D\ |\ f(x)\preceq y \}$ admits maximum (resp. $\{x\in\D\ |\ f(x)\succeq y \}$ admits minimum).
In this case, the mapping $f^\sharp:\CCC\rightarrow\D$, $y\mapsto \bigoplus\{x\in\D\ |\ f(x)\preceq y\}$ (resp. $f^\flat:\CCC\rightarrow\D$, $y\mapsto \bigwedge\{x\in\D\ |\ f(x)\succeq y\}$) is called the residual (resp. dual residual) of $f$.

\begin{remark}\label{rem:residual_property}
If $\Pi$ is residuated (resp. dually residuated), its residual (resp. dual residual) is isotone,
\[
   \Pi \circ \Pi^\sharp \preceq \mbox{Id}_\CCC, \quad \mbox{and}\quad \Pi^\sharp\circ\Pi \succeq \mbox{Id}_\D 
\] 
\[
   (\mbox{resp.} \ \Pi \circ \Pi^\flat \succeq \mbox{Id}_\CCC,\quad\mbox{and}\quad\Pi^\flat\circ\Pi\preceq\mbox{Id}_\D),
\]
where $\mbox{Id}_\D$ and $\mbox{Id}_\CCC$ indicate the identity mappings in $\D$ and $\CCC$, respectively.
From these observations we can derive the following properties:
\[
   \Pi(x) \preceq y \ \Leftrightarrow \ x \preceq \Pi^\sharp(y) ,
\]
\[
   \Pi(x) \succeq y \ \Leftrightarrow \ x \succeq \Pi^\flat(y) .
\]
We prove only the former, since the proof of the latter is analogous.

"$\Leftarrow$": since $\Pi$ is isotone, $x\preceq \Pi^\sharp(y)$ implies $\Pi(x) \preceq \Pi(\Pi^\sharp(y))$, and since $\Pi\circ \Pi^\sharp \preceq \mbox{Id}_\CCC$, $\Pi(\Pi^\sharp(y)) \preceq y$.

"$\Rightarrow$": since $\Pi^\sharp$ is isotone, $\Pi(x) \preceq y$ implies $\Pi^\sharp(\Pi(x)) \preceq \Pi^\sharp(y)$, and since $\mbox{Id}_\D \preceq \Pi^\sharp\circ\Pi$, $x \preceq \Pi^\sharp(\Pi(x))$.
\end{remark}

\section{The Hadamard product of counters: definition and residuals}\label{se:2}

In this section, we define the Hadamard product and its residuals; these operations are useful for solving optimal-control problems for some interesting classes of discrete event systems, as will be discussed in the next section.

The Hadamard product of two counters $s_1,s_2\in\Sigma$, denoted by $s_1\odot s_2$, is defined by
\[
    (s_1 \odot s_2) (t) = s_1(t) \otimes s_2(t) \quad \forall t\in\Z.
\]
In standard algebra, it corresponds to the element-wise addition of the coefficients of the corresponding series.
We recall from~\cite{4605920} that $\odot$ is commutative and distributes over finite $\wedge$.

\begin{rem}\label{rem:difference_and_residuals}
Note that, given two counters $x,a\in\Sigma$, the series $\Pi_a(x)=a\odot x$ is always a counter.
On the other hand, given $y,a\in\Sigma$, the same is not always true for the series $\bar{x}$ defined by $\bar{x}(t) = y(t) - a(t)$ $\forall t\in\Z$, $\bar{x}(-\infty)=-\infty$, $\bar{x}(+\infty) = +\infty$. 
As the following discussion will reveal, $\Pi_a$ is both residuated and, under certain conditions, dually residuated.
Hence, the greatest counter less than or equal to $\bar{x}$ (in the sense of the order in $\Sigma$) is given by the residual of $\Pi_a$, $\Pi_a^\sharp(y) = y\odot^\sharp a$, and the least counter greater than or equal to $\bar{x}$, when defined, is given by the dual residual of $\Pi_a$, $\Pi_a^\flat(y) = y\odot^\flat a$.
The difference between series $\bar{x}$, $y\odot^\sharp a$, and $y\odot^\flat a$ is shown through an example in Figure~\ref{fig:difference_and_residuals}.
Let us now formally characterize the operations $\odot^\sharp$ and $\odot^\flat$. \hfill $\Diamond$
\end{rem}

\tikzset{mycircle/.style={draw, black, circle, minimum size=.1}}
\newsavebox{\nodemycircle}
\sbox\nodemycircle{\tikz{\node[mycircle]{\!};}}
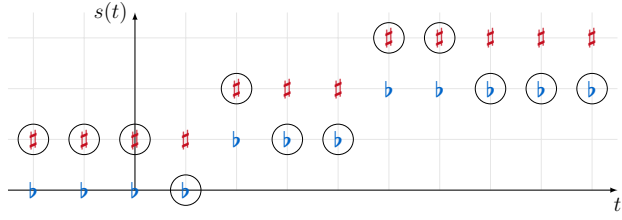
\begin{figure}
    \centering
    \resizebox{.7\linewidth}{!}{
        \begin{tikzpicture}
\large
\draw[help lines,xstep=1,ystep=1,gray!20] (-2.5,0) grid (9.5,3.5);
\draw[-latex] (-2.5,0)--(9.5,0);
\draw[-latex] (0,0)--(0,3.5);
\node[anchor=north] at (9.5,0) {$t$};
\node[anchor=east] at (0,3.5) {$s(t)$};


\draw (-2,1) circle (.3);
\draw (-1,1) circle (.3);
\draw (0,1) circle (.3);
\draw (1,0) circle (.3);
\draw (2,2) circle (.3);
\draw (3,1) circle (.3);
\draw (4,1) circle (.3);
\draw (5,3) circle (.3);
\draw (6,3) circle (.3);
\draw (7,2) circle (.3);
\draw (8,2) circle (.3);
\draw (9,2) circle (.3);

\node [anchor=center,myred] at (-2,1) {$\boldsymbol{\sharp}$};
\node [anchor=center,myred] at (-1,1) {$\boldsymbol{\sharp}$};
\node [anchor=center,myred] at (0,1) {$\boldsymbol{\sharp}$};
\node [anchor=center,myred] at (1,1) {$\boldsymbol{\sharp}$};
\node [anchor=center,myred] at (2,2) {$\boldsymbol{\sharp}$};
\node [anchor=center,myred] at (3,2) {$\boldsymbol{\sharp}$};
\node [anchor=center,myred] at (4,2) {$\boldsymbol{\sharp}$};
\node [anchor=center,myred] at (5,3) {$\boldsymbol{\sharp}$};
\node [anchor=center,myred] at (6,3) {$\boldsymbol{\sharp}$};
\node [anchor=center,myred] at (7,3) {$\boldsymbol{\sharp}$};
\node [anchor=center,myred] at (8,3) {$\boldsymbol{\sharp}$};
\node [anchor=center,myred] at (9,3) {$\boldsymbol{\sharp}$};

\node [anchor=center,myblue] at (-2,0) {$\boldsymbol{\flat}$};
\node [anchor=center,myblue] at (-1,0) {$\boldsymbol{\flat}$};
\node [anchor=center,myblue] at (0,0) {$\boldsymbol{\flat}$};
\node [anchor=center,myblue] at (1,0) {$\boldsymbol{\flat}$};
\node [anchor=center,myblue] at (2,1) {$\boldsymbol{\flat}$};
\node [anchor=center,myblue] at (3,1) {$\boldsymbol{\flat}$};
\node [anchor=center,myblue] at (4,1) {$\boldsymbol{\flat}$};
\node [anchor=center,myblue] at (5,2) {$\boldsymbol{\flat}$};
\node [anchor=center,myblue] at (6,2) {$\boldsymbol{\flat}$};
\node [anchor=center,myblue] at (7,2) {$\boldsymbol{\flat}$};
\node [anchor=center,myblue] at (8,2) {$\boldsymbol{\flat}$};
\node [anchor=center,myblue] at (9,2) {$\boldsymbol{\flat}$};

\end{tikzpicture}
    }
    \caption{Series $\bar{x}$, $y\odot^\sharp a$, and $y\odot^\flat a$, indicated, respectively, by \usebox{\nodemycircle}'s, \textcolor{myred}{$\boldsymbol{\sharp}$}'s, and \textcolor{myblue}{$\boldsymbol{\flat}$}'s, when $y = 1\delta^{1}\oplus 3\delta^4\oplus 5\delta^{+\infty}$ and $a=0\delta^0\oplus 1\delta^2 \oplus 2\delta^6\oplus 3\delta^{\infty}$. Note that $\bar{x}$ is not a counter.}
    \label{fig:difference_and_residuals}
\end{figure}

The following result, proven in~\cite{4605920}, shows that the Hadamard product is residuated.

\begin{proposition}
The mapping $\Pi_a:\Sigma\rightarrow\Sigma$, $x\mapsto a\odot x$ is residuated for any $a\in\Sigma$.
Its residual is denoted by $\Pi_a^\sharp(y) = y\odot^\sharp a$, and corresponds to the greatest counter $x\in\Sigma$ that satisfies $a\odot x\preceq y$.
\end{proposition}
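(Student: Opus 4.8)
The plan is to invoke the standard criterion of residuation theory: an isotone mapping between complete dioids is residuated exactly when it preserves arbitrary suprema (lower semicontinuity). Since $\Sigma$ is a complete dioid and, by Remark~\ref{rem:difference_and_residuals}, $\Pi_a$ genuinely maps $\Sigma$ into $\Sigma$, only two things remain: that $\Pi_a$ is isotone, and that it commutes with arbitrary $\oplus$. Rather than quote the criterion as a black box, I would then exhibit the maximum explicitly, which keeps the argument self-contained and matches the definition of residuation stated in Section~\ref{su:residuation_theory}.

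Isotonicity is immediate and pointwise: if $x\preceq x'$ then $x(t)\preceq x'(t)$ for every $t\in\Z$, and since multiplication is isotone in $\Zminbar$ we get $a(t)\otimes x(t)\preceq a(t)\otimes x'(t)$, i.e.\ $a\odot x\preceq a\odot x'$. The central computation is distributivity over suprema. For any family $\X\subseteq\Sigma$, evaluating at a finite $t$ and using that suprema in $\Sigma$ are taken coefficientwise,
\[
    \Big(a\odot\bigoplus_{x\in\X}x\Big)(t) = a(t)\otimes\bigoplus_{x\in\X}x(t) = \bigoplus_{x\in\X}\big(a(t)\otimes x(t)\big) = \Big(\bigoplus_{x\in\X}a\odot x\Big)(t),
\]
where the middle equality is precisely the distributivity of $\otimes$ over infinite $\oplus$ in the \emph{complete} dioid $\Zminbar$. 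Hence $\Pi_a\big(\bigoplus_{x\in\X}x\big)=\bigoplus_{x\in\X}\Pi_a(x)$.

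With these facts in hand I would fix $y\in\Sigma$, set $M=\{x\in\Sigma\mid a\odot x\preceq y\}$, and take $\hat x=\bigoplus_{x\in M}x$, which exists because $\Sigma$ is complete. By construction $\hat x$ is an upper bound of $M$, so it suffices to show $\hat x\in M$. Applying the distributivity just established (with $\X=M$) and then Remark~\ref{rem:oplus_property} in its form for arbitrary suprema, namely that a supremum of elements all $\preceq y$ is itself $\preceq y$,
\[
    a\odot\hat x = \bigoplus_{x\in M} (a\odot x)\preceq y,
\]
so $\hat x\in M$ and is therefore the greatest element of $M$. This is exactly the statement that $\Pi_a$ is residuated, with $\Pi_a^\sharp(y)=y\odot^\sharp a=\hat x$ the greatest counter satisfying $a\odot x\preceq y$.

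I do not expect a genuine obstacle here; the points demanding care are bookkeeping rather than conceptual. First, the coefficient product lives in $\Zminbar$, where $\varepsilon=+\infty$ is absorbing, so one should confirm that the endpoint conventions of Remark~\ref{rem:difference_and_residuals} are respected and that $\hat x$, obtained as a supremum in $\Sigma$, is automatically a genuine counter (this is free from completeness of $\Sigma$). Second, the whole argument hinges on the completeness of $\Zminbar$: it is what licenses exchanging $\otimes$ with an infinite $\oplus$ in the displayed identity, and without it the set $M$ would not be guaranteed to admit a maximum.
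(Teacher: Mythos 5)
Your proof is correct, but there is nothing in the paper to compare it against: the paper does not prove this proposition at all, it imports it from~\cite{4605920}. What you supply is the classical residuation-theory argument, made self-contained: $\Pi_a$ is isotone, it preserves arbitrary suprema because suprema in $\Sigma$ are computed coefficientwise and $\Zminbar$ is a complete dioid (so $\otimes$ distributes over infinite $\oplus$ by definition), and therefore the supremum $\hat{x}$ of the solution set $M=\{x\in\Sigma\ |\ a\odot x\preceq y\}$ itself satisfies $a\odot\hat{x}=\bigoplus_{x\in M}(a\odot x)\preceq y$, i.e.\ it is the maximum of $M$ — exactly the paper's definition of residuation. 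Two points that you assert in passing deserve a one-line verification each, though neither is a genuine gap: (i) the identification of the supremum in $\Sigma$ with the coefficientwise supremum requires checking that the pointwise supremum of a family of counters is again a counter, which holds because the pointwise (standard) infimum of non-decreasing maps is non-decreasing and the end-point conditions $s(-\infty)=-\infty$, $s(+\infty)=+\infty$ are preserved; and (ii) $M$ is never empty, since $a\odot s_\varepsilon = s_\varepsilon\preceq y$ by the absorption property of $\varepsilon$ in $\Zminbar$ (and even if $M$ were empty the argument would survive, the empty supremum being $s_\varepsilon$). With those two sentences added, your argument is a complete substitute for the external citation.
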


In general, however, the mapping $\Pi_a$ is not dually residuated.
Indeed, if for a certain $t\in\Z$ $a(t)=+\infty$ and $y(t)\neq +\infty$, the least solution $x$ of $a\odot x\succeq y$ is not defined, as inequality $a(t)\otimes x(t) \succeq y(t)$ (in standard algebra, $+\infty + x(t) \leq y(t)$) does not admit solutions.
Another situation in which $\Pi_a$ is not dually residuated is when there exists $t\in\Z$ such that $a(t) = -\infty$ and $y(t) \neq +\infty$; in this case, inequality $a(t)\otimes x(t) \succeq y(t)$ (in standard algebra, $-\infty + x(t) \leq y(t)$) admits infinitely many solutions, but the infimum of the solution set, $+\infty$, does not belong to it.
The following proposition, proven in~\cite{zorzenon2022implementation}, shows that these two are the only cases in which the dual residual of $\Pi_a$ is not defined.

\begin{proposition}\label{pr:dual_residual}
For $a\in\Sigma$, let $\D_a = \{x\in\Sigma\ |\ x = s_\varepsilon \text{ if }\exists t\in\Z \text{ with }a(t) = -\infty\}$, and $\CCC_a = \{y\in\Sigma\ |\ y(t) = +\infty \ \forall t\in\Z \text{ such that }a(t) \in\{-\infty,+\infty\}\}$.
The mapping $\Pi_a:\D_a\rightarrow\CCC_a$, $x\mapsto a\odot x$ is dually residuated for any $a\in\Sigma$.
Its dual residual is denoted by $\Pi_a^\flat(y) = y\odot^\flat a$, and corresponds to the least counter $x\in\Sigma$ that satisfies $a\odot x\succeq y$.
\end{proposition}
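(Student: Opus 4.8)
The plan is to show that for every $y\in\CCC_a$ the solution set $S_y=\{x\in\D_a\mid a\odot x\succeq y\}$ admits a least element, which is exactly what dual residuation requires (see Section~\ref{su:residuation_theory}). Since $\Sigma$ is a complete dioid, the infimum $\bigwedge S_y$ always exists; the whole difficulty is to prove that this infimum is \emph{itself} a solution, i.e.\ that it belongs to $S_y$. Because $\odot$, $\preceq$ and $\wedge$ all act coefficient-wise, I would carry out the entire argument pointwise in $\Zminbar$, keeping in mind that $\Pi_a$ is isotone (coefficient-wise, $x\mapsto a(t)\otimes x(t)$ is non-decreasing) and that a counter $a$, read as a standard map, is non-decreasing with $a(-\infty)=-\infty$ and $a(+\infty)=+\infty$.

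First I would split on whether $a$ has a coefficient equal to $-\infty$. If $a(t_0)=-\infty$ for some $t_0\in\Z$, monotonicity of $a$ forces $a(t)=-\infty$ for all $t\le t_0$; the definition of $\CCC_a$ then gives $y(t)=+\infty$ for all $t\le t_0$, and since $y$ is a non-decreasing counter this propagates to $y(t)=+\infty$ for every $t$, i.e.\ $y=s_\varepsilon$. As $\D_a=\{s_\varepsilon\}$ in this case, the statement is immediate: the only admissible $x$ is $s_\varepsilon$, and $a\odot s_\varepsilon=s_\varepsilon\succeq s_\varepsilon$. In the complementary case $a(t)\neq-\infty$ for all $t$, we have $\D_a=\Sigma$; here I would first check $S_y\neq\emptyset$ by verifying that $s_\top$ is a solution: where $a(t)$ is finite, $a(t)\otimes(-\infty)=-\infty=\top\succeq y(t)$, and where $a(t)=+\infty$ the condition defining $\CCC_a$ yields $y(t)=+\infty$, so that $a(t)\otimes(-\infty)=+\infty\succeq y(t)$ by the absorbing property of $\varepsilon$.

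The core step is then to show $a\odot\bigl(\bigwedge S_y\bigr)\succeq y$. I would establish distributivity of $\Pi_a$ over arbitrary meets on this domain, $a\odot\bigl(\bigwedge_i x_i\bigr)=\bigwedge_i(a\odot x_i)$, reading both sides coefficient-wise: where $a(t)$ is finite this is the distribution of ordinary addition over suprema, and where $a(t)=+\infty$ both sides collapse to $+\infty$ by absorption. Granting this, and since $a\odot x\succeq y$ for every $x\in S_y$, the meet $\bigwedge_{x\in S_y}(a\odot x)$ is still $\succeq y$ (as $y$ is a lower bound of the family), whence $\bigwedge S_y\in S_y$ and is the sought minimum $\Pi_a^\flat(y)=y\odot^\flat a$.

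I expect the main obstacle to be this distributivity over \emph{infinite} meets — the paper only records distributivity over finite $\wedge$ — together with the recognition that it genuinely fails at coefficients where $a(t)=-\infty$: taking $x_i(t)=i$ gives $a(t)\otimes\bigl(\bigwedge_i x_i(t)\bigr)=-\infty\otimes+\infty=+\infty$ while $\bigwedge_i\bigl(a(t)\otimes x_i(t)\bigr)=-\infty$. This failure is precisely what would make $\bigwedge S_y$ fail to be a solution, and it is what the restriction $\D_a=\{s_\varepsilon\}$ (matched on the codomain side by $\CCC_a$) is designed to circumvent; making this correspondence exact is the delicate part of the argument.
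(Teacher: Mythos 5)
Your proposal cannot be checked against an in-document argument: the paper itself does not prove Proposition~\ref{pr:dual_residual}, but defers it to the reference~\cite{zorzenon2022implementation}. Judged on its own merits, your proof is correct and essentially complete. The structure --- (i) split on whether $a(t_0)=-\infty$ for some finite $t_0$; (ii) in that case show, via monotonicity of counters, that membership in $\CCC_a$ forces $y=s_\varepsilon$, so the solution set is exactly $\D_a=\{s_\varepsilon\}$ and trivially has a minimum; (iii) otherwise $\D_a=\Sigma$, the solution set is nonempty because $s_\top$ solves $a\odot x\succeq y$ (absorption handling the coefficients where $a(t)=+\infty$), and $\Pi_a$ preserves \emph{arbitrary} meets coefficient-wise, so $\bigwedge S_y$ is itself a solution --- is the standard characterization of dually residuated maps (meet-preservation plus nonemptiness of each solution set) instantiated concretely, and your counterexample with $x_i(t)=i$ correctly isolates why meet-preservation, hence dual residuation, fails precisely when $a$ has a $-\infty$ coefficient, i.e.\ why the restriction to $\D_a$ and $\CCC_a$ is the right one. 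Two steps you leave implicit should be stated explicitly. First, that $\Pi_a$ really maps $\D_a$ into $\CCC_a$ (needed for the statement to make sense): if $a(t)=+\infty$ then $(a\odot x)(t)=+\infty$ by absorption, and if $a(t)=-\infty$ then $x=s_\varepsilon$, so again $(a\odot x)(t)=+\infty$. Second, that arbitrary infima in $\Sigma$ are computed coefficient-wise: the pointwise standard supremum of a family of counters is again a counter (it is non-decreasing in the standard sense and has the correct end-point values) and is their greatest lower bound in $\Sigma$; your coefficient-wise reading of $\bigwedge S_y$ rests on this, and it is exactly the kind of completeness fact that can silently fail in function dioids, so it deserves a sentence rather than being absorbed into ``everything acts coefficient-wise.'' With those two lines added, the argument stands as a self-contained proof.
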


Note that, for $y\odot^\flat a$ to be defined for any $y\in\Sigma$, it suffices that $a(t) \neq \pm \infty$ for all $t\in\Z$.
This condition is not restrictive for application purposes, as $a(t)$ will typically denote the (finite) accumulated number of firings of a transition up to and including time $t$.
Hence, Propositions~\ref{pr:Hadamard_res} and~\ref{pr:dual_residual} guarantee the existence of the residual and dual residual of the Hadamard product for any case of practical interest.
In the next subsection, we see how to compute the results of these operations.

\section{Algorithms for the Hadamard product and its residuals}\label{se:3}

In the following subsections, the algorithms for computing the Hadamard product, its residual, and its dual residual are described, and their correctness proven.
In order to implement operations on non-increasing formal power series, it is convenient to consider separately monomials, polynomials, and ultimately periodic series.
We will always use the following notation:
\[
    r=\nu\delta^{\tau},\quad r'=\nu'\delta^{\tau'},
\]
\[
   p=\bigoplus_{i=1}^m n_i\delta^{t_i},\quad  p' = \bigoplus_{j=1}^{m'} n_j'\delta^{t_j'},
\]
\[
    q=\bigoplus_{i=1}^l N_i\delta^{T_i},\quad q'=\bigoplus_{j=1}^{l'} N_j'\delta^{T_j'}, 
\]
\[
    s=p\oplus qr^*,\quad s'=p'\oplus q'r'^*,
\]
where $s,s'$ are in canonical form, and the monomials composing polynomials $p,p',q,q'$ are such that their coefficients and $\delta$-exponents are written in increasing order (with respect to the order in standard algebra).
Before going into the details of the different algorithms for monomials, polynomials, and ultimately periodic series, it is useful to summarize them.
The rules for computing these operations are reported in Table~\ref{tab:procedures}, whose interpretation is explained in the following.
The formulas show the rules to compute $\odot$, $\odot^\sharp$, and $\odot^\flat$ between monomials $r$, $r'$, polynomials $p$, $p'$, and ultimately periodic series $s$, $s'$.
Column "Convention $+\infty-\infty$" explains how to interpret the standard additions and subtractions contained in column "Monomials", when $\nu$ or $\nu'$ are $+\infty$ and $-\infty$.

The computation of the Hadamard product, its residual, and its dual residual on monomials and polynomials is straightforward and, for polynomials, the result can be obtained in time complexity $\pazocal{O}(mm')$; the situation is less trivial when considering ultimately periodic series.
An important observation is that applying the Hadamard product and its residuals on two ultimately periodic series $s$ and $s'$ yields another ultimately periodic series $s''$, with throughput $\nu''/\tau''$ and periodic behavior starting at the latest at time $t_{\textup{p}}''$ (the values of $\nu''$, $\tau''$, and $t_{\textup{p}}''$ being reported in the table). 
Consequently, to compute the result of operation $\circ\in\{\odot,\odot^\sharp,\odot^\flat\}$ between $s$ and $s'$, we can adopt the following procedure: obtain the polynomials, say $\tilde{p}$ and $\tilde{p}'$, composed by the first terms of series $s$ and $s'$ up to and including time $t_{\textup{p}}''+\tau''-1$; compute $\tilde{p}''=\tilde{p}\circ\tilde{p}'$; define polynomials $p''$ and $q''$ such that elements of $\tilde{p}''$ with a $\delta$-exponent less than $t_{\textup{p}}''$ belong to $p''$, and those with a $\delta$-exponent between $t_{\textup{p}}''$ and $t_{\textup{p}}'' + \tau'' -1$ belong to $q''$; the result of $s\circ s'$ is then $s'' = p''\oplus q'' (\nu''\delta^{\tau''})^*$.
Note that values of $t_\textup{p}''$ reported in the table are only upper bounds of the starting time of the periodic pattern of $s''$; a formula for the exact starting time is indeed not necessary for computing $s''$.
The only inconvenience is that series $p''\oplus q''(\nu\delta^{\tau''})^*$ obtained in this way may be not in canonical form, resulting in a transient part longer than necessary; nevertheless, rewriting a given series in canonical form is not computationally expensive.

The procedure described above, of complexity $\pazocal{O}(\tilde{m}\tilde{m}')$ where $\tilde{p}=\bigoplus_{i=1}^{\tilde{m}} \tilde{n}_i\delta^{\tilde{t}_i}$ and $\tilde{p}' = \bigoplus_{j=1}^{\tilde{m}'} \tilde{n}_j'\delta^{\tilde{t}_j'}$, can be applied successfully for each operation. 
A simple formula for $t_\textup{p}$ is unknown to the authors for the residual of the Hadamard product, but an upper bound for the beginning of the periodic pattern of $s'' = s\odot^\sharp s'$ can be computed on the basis of the analysis of series $\bar{s}$, defined by $\bar{s}(t) = s(t) - s'(t)$ for all $t\in\Z$; we recall that series $s''$ is then the greatest counter less than or equal to $\bar{s}$ (see Remark~\ref{rem:difference_and_residuals}).
It turns out that we can take $t_\textup{p}''$ as $t_\textup{p}'' = \bar{t}_{\textup{p}}+\kappa\tau''$, with $\bar{t}_\textup{p}=\max(T_1,T_1')$,
\[
    \displaystyle\kappa=1+\max\left(0,\ceil{\frac{\max_{i=\bar{t}_1}^{\bar{t}_\textup{p}-1}\bar{s}(i)-\max_{j=\bar{t}_\textup{p}}^{\bar{t}_\textup{p}+\tau''-1}\bar{s}(j)}{\nu''}}\right),
\] 
and $\bar{t}_1=\min(t_1,t_1')$.

\begin{landscape}
\begin{table}[t]
\centering
\caption{Rules for computing $\odot$, $\odot^\sharp$, and $\odot^\flat$ between monomials $r=\nu\delta^{\tau}$ and $r'=\nu'\delta^{\tau'}$, polynomials $p=\bigoplus_{i=1}^m n_i\delta^{t_i}$ and $p' = \bigoplus_{j=1}^{m'} n_j'\delta^{t_j'}$, and ultimately periodic series $s=p\oplus qr^*$ and $s'=p'\oplus q'r'^*$, where $q=\bigoplus_{i=1}^l N_i\delta^{T_i}$ and $q'=\bigoplus_{j=1}^{l'} N_j'\delta^{T_j'}$.}
\label{tab:procedures}
\begin{tabular}{@{}lllllll@{}}
\toprule
\multirow{2}{*}{} & \multirow{2}{*}{\begin{tabular}[c]{@{}l@{}}Convention\\ $+\infty-\infty$\end{tabular}} & \multirow{2}{*}{Monomials}                                                                                             & \multirow{2}{*}{Polynomials}                                                                                                                                                                                                                                                                        & \multicolumn{3}{c}{Ultimately periodic series}                                                                  \\ \cmidrule(l){5-7} 
                  &                                                                                        &                                                                                                                        &                                                                                                                                                                                                                                                                                                     & $\tau''$                   & $\nu''$                                                  & $t_{\textup{p}}''$            \\ \midrule
$\odot$           & $+\infty$                                                                              & $(\nu+\nu')\delta^{\min(\tau,\tau')}$                                                                                            & $\displaystyle\bigoplus_{i=1}^{m}\bigoplus_{j=1}^{m'}(n_i\delta^{t_i}\odot n_j'\delta^{t_j'})$                                                                                                                                                                                                      & $\mbox{lcm}(\tau,\tau')$ & $\tau''\left(\frac{\nu}{\tau}+\frac{\nu'}{\tau'}\right)$ & $\max(T_1,T_1')$ \\
$\odot^\sharp$    & $-\infty$                                                                              & $\displaystyle\begin{dcases} (\nu-\nu')\delta^\tau & \mbox{if }\tau<\tau',\\ (\nu-\nu')\delta^{+\infty} & \mbox{otherwise}\end{dcases}$ & $\displaystyle\bigwedge_{j=1}^{m'}\bigoplus_{i=1}^{m}(n_i\delta^{t_i}\odot^\sharp n_j'\delta^{t_j'})$                                                                                                                                                                                               & $\mbox{lcm}(\tau,\tau')$ & $\tau''\left(\frac{\nu}{\tau}-\frac{\nu'}{\tau'}\right)$ & see text                \\
$\odot^\flat$     & $+\infty$                                                                              & $\displaystyle\begin{dcases} (\nu-\nu')\delta^\tau & \mbox{if }\tau\leq \tau',\\ \mbox{undefined} & \mbox{otherwise} \end{dcases}$  & \begin{tabular}[c]{@{}l@{}}$\displaystyle\begin{dcases} \bigoplus_{i=1}^{m}\bigwedge_{j=1}^{m'}(n_i\delta^{t_i}\odot^\flat n_j'\delta^{t_j'}) & \mbox{if }t_m\leq t_m',\\ \mbox{undefined} & \mbox{otherwise} \end{dcases}$\\ignoring undefined results on monomials\end{tabular} & $\mbox{lcm}(\tau,\tau')$ & $\tau''\left(\frac{\nu}{\tau}-\frac{\nu'}{\tau'}\right)$ & $\max(T_1,T_1')$ \\ \bottomrule
\end{tabular}
\end{table}
\end{landscape}



\subsection{The Hadamard product}

\subsubsection{The Hadamard product for monomials}

\begin{proposition}[$\odot$ for monomials]
The following rule holds:
\[
    r\odot r' = (\nu+\nu')\delta^{\min(\tau,\tau')}
\]
with the convention that $+\infty-\infty=+\infty$.
\end{proposition}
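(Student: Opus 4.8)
The plan is to verify the identity pointwise, directly from the definition $(r \odot r')(t) = r(t) \otimes r'(t)$, which in $\Zminbar$ amounts to ordinary addition of the coefficients. The first step is to record the explicit counter function of a single monomial: by the compact-representation convention of Example~\ref{ex:counters}, the monomial $\nu\delta^\tau$ expands to the counter with $r(t) = \nu$ for every finite $t \le \tau$ and $r(t) = +\infty = \varepsilon$ for every finite $t > \tau$ (together with $r(-\infty) = -\infty$ and $r(+\infty) = +\infty$), and likewise $r'(t) = \nu'$ for $t \le \tau'$ and $r'(t) = +\infty$ for $t > \tau'$.

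Since $\odot$ is commutative (as recalled above), I may assume without loss of generality that $\tau \le \tau'$, so that $\min(\tau,\tau') = \tau$. It then suffices to split the finite exponents into three ranges and add the two coefficient functions in each: for $t \le \tau$ both monomials contribute their finite coefficients, giving $(r \odot r')(t) = \nu + \nu'$; for $\tau < t \le \tau'$ we have $r(t) = +\infty$ and $r'(t) = \nu'$; and for $t > \tau'$ both equal $+\infty$. In the last two ranges the sum is $+\infty$, so $(r \odot r')(t)$ equals $\nu + \nu'$ for $t \le \tau$ and $+\infty$ afterwards. This is exactly the counter function of $(\nu+\nu')\delta^{\tau} = (\nu+\nu')\delta^{\min(\tau,\tau')}$, which settles the case of finite coefficients.

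The only point that requires care -- and the single genuine obstacle -- is the treatment of infinite coefficients, since then the finite-looking sum $\nu + \nu'$ in the first range can be the indeterminate form $(+\infty) + (-\infty)$. This is precisely where the stated convention $+\infty - \infty = +\infty$ enters: it is nothing but the absorbing rule $a \otimes \varepsilon = \varepsilon$ with $\varepsilon = +\infty$ in $\Zminbar$, as already noted at the end of Example~\ref{ex:counters}. Reading every occurrence of $\nu + \nu'$ as the product $\otimes$ in $\Zminbar$, the three-range computation above is unchanged and now holds for all $\nu,\nu' \in \Zbar$; a final check that the boundary values at $t = \pm\infty$ are the required $-\infty$ and $+\infty$ confirms that the right-hand side is a well-formed counter, completing the argument.
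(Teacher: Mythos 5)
Your proof is correct and takes essentially the same route as the paper: write out the coefficient function of each monomial explicitly and compute the Hadamard product pointwise, splitting the time axis at $\tau$ and $\tau'$ to obtain the coefficient function of $(\nu+\nu')\delta^{\min(\tau,\tau')}$. You in fact go a bit further than the paper, which restricts its explicit computation to $\nu,\nu',\tau,\tau'\in\Z$ and defers the infinite cases to ``inspection,'' whereas you dispose of them uniformly by reading the coefficient sum as $\otimes$ in $\Zminbar$, so that the convention $+\infty-\infty=+\infty$ is just the absorbing rule for $\varepsilon$.
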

\begin{proof}
As follows, we only consider $\nu,\nu',\tau,\tau'\in\Z$; the cases in which $\nu,\nu',\tau,\tau'\in\{-\infty,+\infty\}$ have to be considered separately, but can be easily checked by inspection.
The monomials $r$ and $r'$ correspond to the mappings
\[
    r(\tau)=
    \begin{dcases}
        \nu & \mbox{if }t\leq \tau,\\
        +\infty & \mbox{otherwise}
    \end{dcases}
    \quad\mbox{and}\quad
    r'(\tau)=
    \begin{dcases}
        \nu' & \mbox{if }t\leq \tau',\\
        +\infty & \mbox{otherwise}
    \end{dcases}.
\]
Therefore,
\[
    (r\odot r')(\tau) =
    \begin{dcases}
        \nu+\nu' & \mbox{if }t\leq\min(\tau,\tau')\\
        +\infty & \mbox{otherwise}
    \end{dcases},
\]
which can be compactly represented by the monomial $(\nu+\nu')\delta^{\min(\tau,\tau')}$.
\end{proof}

\subsubsection{The Hadamard product for polynomials}

\begin{proposition}[$\odot$ for polynomials]
The following rule holds:
\[
    p\odot p' = \bigoplus_{i=1}^m\bigoplus_{j=1}^{m'} n_i\delta^{t_i}\odot n_j'\delta^{t_j'}.
\]
\end{proposition}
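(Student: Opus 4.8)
The plan is to reduce the statement to the single fact that the Hadamard product distributes over finite $\oplus$-sums, and then to expand the two polynomials, each of which is by definition a finite $\oplus$-sum of monomials. Since the products $n_i\delta^{t_i}\odot n_j'\delta^{t_j'}$ of the individual monomials are already covered by the previous proposition, nothing further is needed once the distributivity law is available.

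First I would establish that $\odot$ distributes over $\oplus$. Although the paper recalls from \cite{4605920} that $\odot$ distributes over finite $\wedge$, the property actually needed here is distributivity over $\oplus$, which follows immediately from the pointwise definition of the Hadamard product together with the distributivity of $\otimes$ over $\oplus$ in the base dioid $\Zminbar$. For any $s_1,s_2,s_3\in\Sigma$ and any $t\in\Z$,
\[
(s_1\odot(s_2\oplus s_3))(t) = s_1(t)\otimes(s_2(t)\oplus s_3(t)) = (s_1(t)\otimes s_2(t))\oplus(s_1(t)\otimes s_3(t)),
\]
which is precisely $((s_1\odot s_2)\oplus(s_1\odot s_3))(t)$. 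As this holds for every $t$, we obtain $s_1\odot(s_2\oplus s_3)=(s_1\odot s_2)\oplus(s_1\odot s_3)$; commutativity of $\odot$ yields the symmetric law, and a straightforward induction extends both to arbitrary finite $\oplus$-sums.

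Then I would simply expand. Writing $p=\bigoplus_{i=1}^m n_i\delta^{t_i}$ and $p'=\bigoplus_{j=1}^{m'} n_j'\delta^{t_j'}$ and applying the distributivity law first in the left argument and then in the right,
\[
p\odot p' = \bigoplus_{i=1}^m\left(n_i\delta^{t_i}\odot\bigoplus_{j=1}^{m'} n_j'\delta^{t_j'}\right) = \bigoplus_{i=1}^m\bigoplus_{j=1}^{m'} n_i\delta^{t_i}\odot n_j'\delta^{t_j'},
\]
which is exactly the claimed identity.

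I expect no genuine obstacle here; the argument is essentially bookkeeping once distributivity is noted. The only point requiring a moment of care is that the monomial coefficients or exponents may equal $\pm\infty$, so one must check that the distributivity step remains valid in that regime. But $\Zminbar$ is a complete dioid, so the distributivity of $\otimes$ over $\oplus$ holds unconditionally (with the absorbing convention $+\infty\otimes-\infty=+\infty$ already fixed in the monomial rule), and the pointwise computation above is therefore valid for all coefficients. A secondary remark is that the right-hand side need not be in canonical form—the double sum may contain redundant monomials—but this does not affect the equality of the two series as elements of $\Sigma$, since equality in $\Sigma$ is equality of the underlying functions.
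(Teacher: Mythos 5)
Your proposal is correct and follows essentially the same route as the paper, whose entire proof reads: ``The proof comes directly from the distributivity property of $\odot$ over $\oplus$.'' You merely fill in the details the paper leaves implicit---the pointwise verification that $\odot$ distributes over $\oplus$ (which the paper never states explicitly, recalling only distributivity over finite $\wedge$) and the care needed at $\pm\infty$ coefficients---which is a welcome tightening but not a different argument.
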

\begin{proof}
The proof comes directly from the distributivity property of $\odot$ over $\oplus$.
\end{proof}

The computation of $p\odot p'$ can be simplified in some cases.
If a pair of monomials with $t_i\leq t_j'$ is encountered, then for the next term we can advance in the computation with $i+1$ and the current $j$ value (without setting it back to 1).
The remaining terms for the current $i$ value would not contribute to the result, as all their exponents would equal $t_i$ and their coefficients would be increasing in the standard sense.
Another possible way to increase the efficiency of this computation is in case $\min(t_i,t_j')=t'_{m'}$.
After calculating the Hadamard product of the monomials with this property, the computation can be terminated, as the remaining terms will again not contribute to the end result.

\begin{example}
The usefulness of the above mentioned simplification rules is evident from the following example:
\[
    \begin{array}{rl}
    (2\delta^2\oplus 3\delta^5 \oplus 7\delta^8) \odot (4\delta^3 \oplus 6\delta^4)\!\!\!\!\! &= (6\delta^2 \oplus \cancel{8\delta^2})\oplus(7\delta^3\oplus 9\delta^4) \oplus (\cancel{11\delta^3}\oplus\cancel{13\delta^4})\\
    &=6\delta^2 \oplus 7\delta^3 \oplus 9\delta^4.
    \end{array}
\]
In this case, the simplification rules essentially halve the amount of operations required.
\end{example}

\subsubsection{The Hadamard product for periodic series}

\begin{proposition}[$\odot$ for periodic series]
Series $s''=s\odot s'$ is periodic, with periodic pattern described by
\[
    \tau'' = \textup{lcm}(\tau,\tau'), \quad \nu'' = \tau''\left(\frac{\nu}{\tau}+\frac{\nu'}{\tau'}\right), \quad t''_{\textup{p}} = \max(T_1,T_1').
\]
\end{proposition}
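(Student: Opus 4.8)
The plan is to check directly that $s''=s\odot s'$ satisfies the characterization of periodicity in Proposition~\ref{pr:periodic_series}. The two ingredients are the pointwise definition $(s\odot s')(t)=s(t)\otimes s'(t)$, i.e.\ ordinary addition of coefficients, and the periodicity of the two factors: by Proposition~\ref{pr:periodic_series}, $s(t+k\tau)=k\nu+s(t)$ for all $t\geq T_1$, $k\geq 0$, and likewise $s'(t+k\tau')=k\nu'+s'(t)$ for all $t\geq T_1'$, $k\geq 0$. The key move is to choose a common period that simultaneously activates both relations.

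Concretely, I set $\tau''=\textup{lcm}(\tau,\tau')$ and write $\tau''=a\tau=b\tau'$ with $a=\tau''/\tau$ and $b=\tau''/\tau'$ positive integers. Fix any $t\geq t_{\textup{p}}''=\max(T_1,T_1')$ and any $k\geq 0$; then $t\geq T_1$ and $t\geq T_1'$, so both periodicity relations may be applied at $t$. Substituting $\tau''=a\tau$ into the relation for $s$ (with multiplier $ka\geq 0$) gives $s(t+k\tau'')=s(t+(ka)\tau)=(ka)\nu+s(t)$, and substituting $\tau''=b\tau'$ into the relation for $s'$ gives $s'(t+k\tau'')=(kb)\nu'+s'(t)$. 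Adding the two and using the pointwise definition of $\odot$ yields
\[
    s''(t+k\tau'')=k(a\nu+b\nu')+s''(t).
\]
This is precisely the defining relation of Proposition~\ref{pr:periodic_series} for $s''$, with period $\tau''=\textup{lcm}(\tau,\tau')$, gain $\nu''=a\nu+b\nu'=\tau''\!\left(\frac{\nu}{\tau}+\frac{\nu'}{\tau'}\right)$, and onset of the periodic regime no later than $t_{\textup{p}}''=\max(T_1,T_1')$, which is the assertion.

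I do not expect a genuine obstacle: the argument reduces to a one-line verification once the common period $\tau''$ is introduced. The only points deserving a remark concern infinities. For finite $t$ and finite $k$ the exponent $t+k\tau''$ is finite, so all additions above take place among the coefficients $s(t),s'(t)$ at finite exponents; when these are finite integers the manipulations are ordinary, and when one of them equals $+\infty$ or $-\infty$ the sum is fixed by the absorbing rule $\varepsilon=+\infty$ of $\Zminbar$, i.e.\ the convention $+\infty-\infty=+\infty$ recorded in Table~\ref{tab:procedures}. The fully degenerate situations in which $\nu$ or $\nu'$ lies in $\{-\infty,+\infty\}$ (a factor of zero or infinite throughput) are then settled separately by inspection, exactly as in the monomial case, and are consistent with the stated formula for $\nu''$.
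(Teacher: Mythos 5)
Your proof is correct and takes essentially the same route as the paper's: the paper likewise writes $\tau'' = \textup{lcm}(\tau,\tau') = \alpha\tau = \alpha'\tau'$ (your $a$, $b$), applies the two periodicity relations at any $t \geq \max(T_1,T_1')$ with multipliers $k\alpha$ and $k\alpha'$, adds them pointwise, and concludes via Proposition~\ref{pr:periodic_series}. Your closing remarks on coefficients equal to $\pm\infty$ are a minor addition not spelled out in the paper's proof, but they do not change the argument.
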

\begin{proof}
Let $\tau'' = \textup{lcm}(\tau,\tau') = \alpha\tau=\alpha'\tau'$.
Then, since counters $s$ and $s'$ are periodic, for all $t\geq \max(T_1,T_1')=t_{\textup{p}}''$, $k\geq 0$,
\begin{align*}
    s''(t+k\tau'') &= s(t+k\tau'') + s'(t+k\tau'') = s(t+k\alpha\tau)+s'(t+k\alpha'\tau')\\
    &= k \alpha\nu + s(t) + k\alpha'\nu'+s'(t) = k(\alpha\nu+\alpha'\nu')+s''(t).
\end{align*}
Therefore, according to Proposition~\ref{pr:periodic_series}, series $s''$ is periodic with $\tau''$, $\nu''$, and $t''_{\textup{p}}$ as claimed in the statement of the proposition. 
\end{proof}

\begin{example}
\label{ex: hadamard_series}
Let $s = e\delta^1 \oplus (2\delta^3)(3\delta^3)^*$ and $s' = (1\delta^2 \oplus 4\delta^5)(4\delta^6)^* $.
To compute $s'' = s\odot s'$, we first determine the following values: $\tau'' = \textup{lcm}(3,6) = 6,\, \nu'' = 6\left( \frac{3}{3} + \frac{4}{6} \right) = 10,\, t''_{\textup{p}} = \max(3,2) = 3$.
Both series are therefore expanded until $t''_{\textup{p}} + \tau'' - 1 = 8$, resulting in
\[
    \tilde{p} = e\delta^1 \oplus 2\delta^3 \oplus 5\delta^6 \oplus 8\delta^8 \quad\text{and}\quad \tilde{p}' = 1\delta^2 \oplus 4\delta^5 \oplus 5\delta^8.
\]
From this, $\tilde{p}'' = \tilde{p}\odot \tilde{p}' = 1\delta^1 \oplus 3\delta^2 \oplus 6\delta^3 \oplus 9\delta^5 \oplus 10\delta^6 \oplus 13\delta^8$ follows.
Distributing the monomials among $p''$ and $q''$, we find \[ s'' = p''\oplus q''(\nu''\delta^{\tau''})^* = 1\delta^1 \oplus 3\delta^2 \oplus (6\delta^3 \oplus 9\delta^5 \oplus 10\delta^6 \oplus 13\delta^8)(10\delta^6)^*. \]
One may notice that this is equivalent to $s'' = 1\delta^1 \oplus (3\delta^2 \oplus 6\delta^3 \oplus 9\delta^5 \oplus 10\delta^6)(10\delta^6)^*$, which is the canonical form of this series.
\end{example}

The above algorithm, with slight modifications, can also be applied in the special case when one of the operands is represented by a polynomial and not as a periodic series.
Consider the Hadamard product of a periodic series $s = p\oplus q(\nu \delta^{\tau})^*$ and a polynomial $p'\neq s_\varepsilon$.
We need to distinguish between two cases:
\begin{itemize}\itemsep0em
    \item The $\delta$-exponent of the last monomial of $p'$ is $+\infty$, i.e., $t_{m'} = +\infty$.
    In this case, the polynomial $p'$ can also be written as
    \begin{equation*}
    \label{eq: poly_written_as_series}
        p' = \left(\bigoplus_{j=1}^{m'-1} n'_j\delta^{t'_j}\right)\oplus n_{m'}\delta^{1+t_{m'-1}}(0\delta^1)^*,
    \end{equation*}
    for instance, $1\delta^3 \oplus 5\delta^{+\infty} = 1\delta^3 \oplus 5\delta^4 (0\delta^1)^*$.
    This leads to $\tau'' = \textup{lcm}(\tau,1) = \tau$ and $\nu'' = \tau\left( \frac{\nu}{\tau} + \frac{0}{1} \right) = \nu$.
    Furthermore, the periodic behavior of the result starts at the latest at time $t''_{\textup{p}} = \max(T_{1},1+t_{m'-1})$.
    With these values, we can expand the series $s$ until $t''_{\textup{p}} + \tau'' - 1$, and the rest of the algorithm works as described above.
    \item In the opposite case, with $t_{m'}\neq +\infty$, we have $p'(t)=+\infty$ for all $t>t_{m'}$, i.e., the last (implicit) term of polynomial $p'$ is $+\infty\delta^{+\infty}$ (which is omitted in the representation, cf. Example~\ref{ex:counters}). 
    This is therefore true for the result as well (as $+\infty$ is absorbing for $\otimes$).
    In the end, this means that $s\odot p'$ can also be represented by a polynomial.
    Here, since all values of the result will be $+\infty$ after time $t_{m'}$, the series $s$ only has to be expanded up to $t_{m'}$.
    The final result can then be obtained by $s'' = s\odot p'  = \tilde{p}\odot p'$.
\end{itemize}

\begin{example}
Let $s = 1\delta^{2}\oplus (3\delta^{4})(2\delta^{5})^*$ and $p' = 3\delta^{4}\oplus 4\delta^{+\infty}$.
As $p'$ can also be written as $p' = 3\delta^{4}\oplus (4\delta^{5})(0\delta^{1})^*$, the periodic behavior of the result starts at $t''_{\textup{p}} = \max(4,5) = 5$ at the latest.
With $\nu'' = \nu = 2$ and $\tau'' = \tau = 5$, the series $s$ is expanded until $t''_{\textup{p}} + \tau''-1 = 9$, resulting in $\tilde{p} = 1\delta^{2}\oplus 3\delta^{4}\oplus 5\delta^{9}$.
The operation $\tilde{p}\odot p'$ yields $4\delta^2 \oplus 6\delta^4 \oplus 9\delta^9$, so that the final result is equal to
\[ s'' = s\odot p' = 4\delta^{2}\oplus 6\delta^{4}\oplus (9\delta^{9})(2\delta^{5})^*. \]
Now consider the Hadamard product of $s$ with $p' = 3\delta^4 \oplus 4\delta^8$.
The series $s$ now only has to be expanded until $t_{m'} = 8$, and the final result is given by
\[ s'' = s\odot p' = 4\delta^2 \oplus 6\delta^4 \oplus 9\delta^8. \]
\end{example}

\subsection{The residual of the Hadamard product}

\subsubsection{The residual of the Hadamard product for monomials}

\begin{proposition}[$\odot^\sharp$ for monomials]
The following rule holds:
\[
    r\odot^\sharp r' = 
    \begin{dcases}
        (\nu-\nu')\delta^{\tau} & \textup{if }\tau<\tau'\\
        (\nu-\nu')\delta^{+\infty} & \textup{otherwise}
    \end{dcases},
\]
with the convention that $+\infty-\infty=-\infty$.
\end{proposition}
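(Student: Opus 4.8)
The plan is to verify directly that the monomial on the right-hand side, call it $x^\star$, coincides with the greatest counter $x\in\Sigma$ satisfying $r'\odot x\preceq r$, which by definition is $r\odot^\sharp r'$ (here $y=r$ and $a=r'$). I would first record the explicit maps associated with the two monomials: $r(t)=\nu$ for $t\le\tau$ and $r(t)=+\infty$ for $t>\tau$, and analogously for $r'$ with $\nu',\tau'$. Recalling that the order on $\Sigma$ is pointwise and that $\preceq$ in $\Zminbar$ reverses the standard $\le$, the feasibility condition $r'\odot x\preceq r$ becomes $r'(t)+x(t)\ge r(t)$ (in standard arithmetic) for every $t\in\Z$, with the understanding that $+\infty$ is absorbing, so that wherever $r'(t)=+\infty$ the constraint at $t$ is vacuous.

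Then I would split into the two cases of the statement and prove feasibility and maximality separately. For feasibility I substitute $x^\star$ and check $r'\odot x^\star\preceq r$ pointwise; for maximality I take an arbitrary feasible $x$ and show $x(t)\ge x^\star(t)$ for all $t$, i.e.\ $x\preceq x^\star$. When $\tau<\tau'$ the claim is $x^\star=(\nu-\nu')\delta^{\tau}$: for $t\le\tau$ both series are finite and the constraint forces $x(t)\ge\nu-\nu'$, with equality attained by $x^\star$; for $\tau<t\le\tau'$ one has $r(t)=+\infty$ while $r'(t)=\nu'$ is finite, so the constraint forces $x(t)=+\infty$, matching $x^\star$, and monotonicity of the counter then propagates $x(t)=+\infty$ to all $t>\tau'$. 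When $\tau\ge\tau'$ the claim is $x^\star=(\nu-\nu')\delta^{+\infty}$: for $t\le\tau'$ the same finite computation gives $x(t)\ge\nu-\nu'$, while for $t>\tau'$ we have $r'(t)=+\infty$, the constraint is vacuous, and the bound $x(t)\ge\nu-\nu'$ is recovered from the non-decreasing character of $x$ together with its value at $\tau'$.

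The step I expect to require the most care is the treatment of the regions where $r'(t)=+\infty$, and in particular where both $r(t)$ and $r'(t)$ equal $+\infty$. There the pointwise inequality imposes no lower bound on $x(t)$, and it is precisely the fact that counters are non-decreasing — not the constraint itself — that pins down $x(t)$ and forces $x\preceq x^\star$. This is also exactly what the convention $+\infty-\infty=-\infty$ encodes, since it assigns the least possible (vacuous) requirement to those $t$, thereby producing the correct coefficient $\nu-\nu'$ in the degenerate subtraction. Finally, I would dispose of the boundary cases in which $\nu,\nu',\tau,\tau'\in\{-\infty,+\infty\}$ by direct inspection, exactly as was done for the Hadamard product of monomials, confirming that the stated formula and sign convention remain valid in each.
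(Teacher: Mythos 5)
Your proposal is correct and follows essentially the same route as the paper: both translate $r'\odot x\preceq r$ into the pointwise standard-algebra inequalities $r'(t)+x(t)\geq r(t)$ together with the monotonicity constraint $x(t)\leq x(t+1)$, split into the cases $\tau<\tau'$ and $\tau\geq\tau'$, and read off the pointwise-minimal (hence $\preceq$-greatest) feasible counter, deferring the infinite-parameter cases to inspection. The paper packages this as a multi-objective optimization problem where you phrase it as feasibility plus maximality of $x^\star$, but the underlying computation, including the role of monotonicity where the constraints are vacuous, is the same.
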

\begin{proof}
As follows, we only consider $\nu,\nu',\tau,\tau'\in\Z$; the cases in which $\nu,\nu',\tau,\tau'\in\{-\infty,+\infty\}$ have to be considered separately, but can be easily checked by inspection.
Counter $r\odot^\sharp r'$ is the greatest solution of $r'\odot x\preceq r$; we distinguish two cases.

\textbf{Case 1: $\tau<\tau'$.}
Equivalently, $r\odot^\sharp r'$ is the counter that solves the following multi-objective optimization problem (written in standard algebra for simplicity; consider always $+\infty-\infty=+\infty$)\footnote{Note that the existence and uniqueness of the solution to this and the following multi-objective optimization problems is guaranteed by the fact that $\Pi_a:\Sigma\rightarrow\Sigma$, $x \mapsto a\odot x$ is residuated for all $a\in\Sigma$.}:
\[
    \begin{array}{rll}
    \min        & x(t) &  \forall t\in\Z, \\
    \mbox{s.t.} & \nu' + x(t) \geq \nu  & \forall t \leq \tau,\\
                & \nu' + x(t) \geq +\infty & \forall \tau < t \leq \tau',\\
                & +\infty + x(t) \geq +\infty & \forall \tau' < t,\\
                & x(t) \leq x(t+1) & \forall t\in\Z.
    \end{array}
\]
It is then clear that the greatest counter that satisfies $r'\odot x\preceq r$ has coefficients
\[
    x(t) = 
    \begin{dcases}
        \nu - \nu' & \mbox{if } t \leq \tau,\\
        +\infty & \mbox{otherwise}
    \end{dcases},
\]
which corresponds to monomial $(\nu-\nu')\delta^{\tau}$.

\textbf{Case 2: $\tau\geq\tau'$.}
Equivalently, $r\odot^\sharp r'$ is the counter that solves the following multi-objective optimization problem (written in standard algebra for simplicity; consider always $+\infty-\infty=+\infty$):
\[
    \begin{array}{rll}
    \min        & x(t) &  \forall t\in\Z, \\
    \mbox{s.t.} & \nu' + x(t) \geq \nu  & \forall t \leq \tau',\\
                & +\infty + x(t) \geq \nu & \forall \tau' < t \leq \tau,\\
                & +\infty + x(t) \geq +\infty & \forall \tau < t,\\
                & x(t) \leq x(t+1) & \forall t\in\Z.
    \end{array}
\]
It is then clear that the greatest counter that satisfies $r'\odot x\preceq r$ has coefficients
\[
    x(t) = \nu - \nu' \quad \mbox{for all }t,
\]
which corresponds to monomial $(\nu-\nu')\delta^{+\infty}$.
\end{proof}

\subsubsection{The residual of the Hadamard product for polynomials}

Before giving the rule for computing $\odot^\sharp$ on polynomials, we need to consider the special case in which the left operand is a polynomial and the right operand a monomial. 

\begin{lemma}[$\odot^\sharp$ between a polynomial and a monomial]\label{le:odot_sharp_poly_mono}
The following rule holds:
\[
    p\odot^\sharp r = \left(\bigoplus_{i=1}^m n_i\delta^{t_i}\right)\odot^\sharp \nu\delta^\tau = \bigoplus_{i=1}^m \left(n_i\delta^{t_i}\odot^\sharp \nu\delta^\tau\right).
\]
\end{lemma}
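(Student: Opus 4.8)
The plan is to show that
$X:=\bigoplus_{i=1}^{m}\left(n_i\delta^{t_i}\odot^\sharp r\right)$
coincides with $p\odot^\sharp r$, which by definition is the greatest counter $x\in\Sigma$ satisfying $r\odot x\preceq p$. I would establish this by verifying that $X$ is \emph{feasible}, i.e. $r\odot X\preceq p$, and \emph{maximal}, i.e. every counter $z$ with $r\odot z\preceq p$ obeys $z\preceq X$. Throughout I would keep in mind that in $\Sigma$ the relation $s\preceq s'$ amounts to $s(t)\ge s'(t)$ in standard algebra for every $t$, so that "greatest in $\preceq$" means "pointwise smallest in the usual order", and that a counter $z$ is monotone, $z(t)\le z(t+1)$.

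Feasibility is the easy half and is purely algebraic. Using the distributivity of $\odot$ over $\oplus$ (already exploited in the rule for $p\odot p'$) together with the residuation inequality $\Pi_r\circ\Pi_r^\sharp\preceq\mathrm{Id}$ of Remark~\ref{rem:residual_property}, I would write
\[
    r\odot X=\bigoplus_{i=1}^{m} r\odot\left(n_i\delta^{t_i}\odot^\sharp r\right)\preceq\bigoplus_{i=1}^{m} n_i\delta^{t_i}=p .
\]
This already gives $X\preceq p\odot^\sharp r$, i.e. the "$\succeq$" half of the claimed identity; the same half also follows directly from isotonicity of the residual, since $p\succeq n_i\delta^{t_i}$ for each $i$ forces $p\odot^\sharp r\succeq n_i\delta^{t_i}\odot^\sharp r$, whence $p\odot^\sharp r\succeq X$ by Remark~\ref{rem:oplus_property}.

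Maximality is the crux, and it is where the monomial shape of $r$ is indispensable. Given $z$ with $r\odot z\preceq p$, I would rewrite the constraint pointwise in standard algebra as $\nu+z(t)\ge p(t)$ for $t\le\tau$, while for $t>\tau$ it is vacuous because $r(t)=+\infty$. Hence $z(t)\ge p(t)-\nu$ for $t\le\tau$, and the monotonicity of the counter $z$ propagates this bound beyond $\tau$ through $z(t)\ge z(\tau)\ge p(\tau)-\nu$, so that $z(t)\ge p(\min(t,\tau))-\nu$ for all $t$. It then remains to identify this lower bound with $X$: evaluating $\bigoplus_i(n_i\delta^{t_i}\odot^\sharp r)$ with the monomial rule (which returns $(n_i-\nu)\delta^{t_i}$ when $t_i<\tau$ and the constant monomial $(n_i-\nu)\delta^{+\infty}$ otherwise) and taking the pointwise minimum yields precisely $X(t)=p(\min(t,\tau))-\nu$. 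Therefore $z\preceq X$, and combined with feasibility this proves $X=p\odot^\sharp r$. The main obstacle is exactly this step: residuals do not distribute over $\oplus$ in the residuated argument in general, and the identity survives here only because the single exponent $\tau$ makes the defining constraint vacuous for $t>\tau$, letting the counter's own monotonicity pin down the values there. The degenerate cases $\nu,\tau\in\{-\infty,+\infty\}$ I would dispatch separately by inspection, as in the monomial rule.
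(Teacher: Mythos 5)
Your proof is correct, and it takes a genuinely different route from the paper's. The paper computes both sides explicitly and matches them: it introduces the largest index $j$ with $t_j\leq\tau$, evaluates $\bigoplus_{i}(n_i\delta^{t_i}\odot^\sharp\nu\delta^\tau)$ via the monomial rule into the explicit polynomial~\eqref{eq:poly_sharp_mono}, then writes $p\odot^\sharp r$ as a multi-objective optimization problem (minimize every $x(t)$ subject to $\nu+x(t)\geq n_i$ on the appropriate time intervals and to $x(t)\leq x(t+1)$), solves it, and observes that the solution is the same polynomial; the boundary case $j=m$, i.e.\ $\tau\geq t_m$, is explicitly omitted ``for the sake of brevity.'' You instead prove the identity by a two-sided inequality: the direction $X:=\bigoplus_{i}(n_i\delta^{t_i}\odot^\sharp r)\preceq p\odot^\sharp r$ comes abstractly from residuation theory (feasibility via distributivity and $\Pi_r\circ\Pi_r^\sharp\preceq\mathrm{Id}$, or alternatively isotonicity of $\Pi_r^\sharp$ combined with Remark~\ref{rem:oplus_property}), so only the reverse direction needs pointwise work. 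The core of that pointwise step --- the constraint $r\odot z\preceq p$ is vacuous for $t>\tau$, and the monotonicity $z(t)\leq z(t+1)$ propagates the bound $p(\tau)-\nu$ past $\tau$ --- is exactly the mechanism by which the paper's optimization problem acquires its solution, so the two arguments share the same key insight. What your organization buys: half the proof is purely order-theoretic, and the closed form $X(t)=p(\min(t,\tau))-\nu$ treats every position of $\tau$ relative to $t_1,\dots,t_m$ uniformly, including the case $\tau\geq t_m$ that the paper skips. What the paper's computation buys: it delivers the answer directly as the explicit polynomial~\eqref{eq:poly_sharp_mono}, which is the representation the implementation actually outputs. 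Like the paper, you defer the cases with infinite $\nu,\tau,n_i,t_i$ to inspection, which is consistent with how the monomial rule itself is proved.
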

\begin{proof}
Let $j\in\{0,1,\ldots,m\}$ be the greatest index for which $t_j \leq \tau$, where $t_0 = -\infty$.
The case in which $j = m$ will not be discussed for the sake of brevity, as it needs to be considered separately.
Then,
\begin{equation}\label{eq:poly_sharp_mono}
    \bigoplus_{i=1}^m \left(n_i\delta^{t_i} \odot^\sharp \nu\delta^\tau\right) = \left(\bigoplus_{i = 1}^j (n_i - \nu) \delta^{t_i}\right) \oplus (n_{j+1}-\nu)\delta^{+\infty}.
\end{equation}
We need to prove that $p\odot^\sharp r$ leads to the same expression.

From the definition of residual,
\[
    p\odot^\sharp r = \bigoplus\{x\in\Sigma\ |\ r\odot x \preceq p\}.
\]
Equivalently, $p\odot^\sharp r$ is the counter that solves the following multi-objective optimization problem (written in standard algebra for simplicity; consider always $+\infty-\infty=+\infty$):
\[
    \begin{array}{rll}
    \min        & x(t) &  \forall t\in\Z, \\
    \mbox{s.t.} & \nu + x(t) \geq n_{i}  & \forall i\in\{1,\ldots,j\},\ t_{i-1} < t \leq t_i,\\
                & \nu + x(t) \geq n_{j+1} & \forall t_j < t \leq \tau,\\
                & +\infty + x(t) \geq n_{j+1} & \forall \tau < t \leq t_{j+1},\\
                & +\infty + x(t) \geq n_{i} & \forall i\in\{j+2,\ldots,m\},\ t_{i-1} < t \leq t_{i},\\
                & +\infty + x(t) \geq +\infty & \forall t_m < t,\\
                & x(t) \leq x(t+1) & \forall t\in\Z.
    \end{array}
\]
Clearly, the optimization problem can be rewritten as 
\[
    \begin{array}{rll}
    \min        & x(t) &  \forall t\in\Z, \\
    \mbox{s.t.} & x(t) \geq n_{i} - \nu  & \forall i\in\{1,\ldots,j\},\ t_{i-1} < t \leq t_i,\\
                & x(t) \geq n_{j+1} - \nu & \forall t_j < t \leq \tau,\\
                & x(t) \geq -\infty & \forall \tau < t,\\
                & x(t) \leq x(t+1) & \forall t\in\Z,
    \end{array}
\]
whose solution is
\[
    x(t) = 
    \begin{dcases}
        n_i - \nu & \forall i\in\{1,\ldots,j\},\ t_{i-1} < t \leq t_i,\\
        n_{j+1} - \nu & \forall t_j < t;
    \end{dcases}
\]
written using the $\delta$-transform, the above expression coincides with~\eqref{eq:poly_sharp_mono}.
\end{proof}

\begin{proposition}[$\odot^\sharp$ for polynomials]
The following rule holds:
\[
    p\odot^\sharp p' = \bigwedge_{j=1}^{m'}\bigoplus_{i=1}^{m} n_i\delta^{t_i}\odot^\sharp n_j'\delta^{t_j'}.
\]
\end{proposition}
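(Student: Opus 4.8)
The plan is to reduce the polynomial-polynomial case to the already-established polynomial-monomial case (Lemma~\ref{le:odot_sharp_poly_mono}) by exploiting the interaction between residuation and the additive structure of the second operand. Writing $p' = \bigoplus_{j=1}^{m'} r_j'$ with $r_j' = n_j'\delta^{t_j'}$, the first step is to observe that the Hadamard product distributes over $\oplus$: since $\otimes$ distributes over $\oplus$ in $\Zminbar$, we have, for every $x \in \Sigma$ and every $t \in \Z$, $(p' \odot x)(t) = p'(t) \otimes x(t) = \bigoplus_{j} (r_j'(t) \otimes x(t)) = \bigoplus_j (r_j' \odot x)(t)$, so that $\Pi_{p'}(x) = \bigoplus_{j=1}^{m'} \Pi_{r_j'}(x)$.

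The heart of the argument is then the general residuation fact that the residual of a finite supremum of residuated maps is the infimum of their residuals. Rather than invoke it abstractly, I would prove it inline from the definition of residual. Fixing $y = p$, an element $x$ satisfies $p' \odot x \preceq p$ iff $\bigoplus_j (r_j' \odot x) \preceq p$, which by Remark~\ref{rem:oplus_property} (in the form $b \oplus c \preceq a \Leftrightarrow b \preceq a$ and $c \preceq a$, extended to finite sums) is equivalent to $r_j' \odot x \preceq p$ for every $j$. Each mapping $\Pi_{r_j'}$ is residuated, so by the characterization in Remark~\ref{rem:residual_property} this is in turn equivalent to $x \preceq p \odot^\sharp r_j'$ for every $j$, hence (again by Remark~\ref{rem:oplus_property}, now in its $\wedge$ form) to $x \preceq \bigwedge_{j=1}^{m'} (p \odot^\sharp r_j')$. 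The solution set of $p'\odot x \preceq p$ is therefore the principal downset of $\bigwedge_j (p \odot^\sharp r_j')$, whose greatest element is $\bigwedge_j(p\odot^\sharp r_j')$ itself; this is exactly the residual, giving $p \odot^\sharp p' = \bigwedge_{j=1}^{m'} (p \odot^\sharp r_j')$.

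The final step is purely substitutional: apply Lemma~\ref{le:odot_sharp_poly_mono} to each factor, $p \odot^\sharp r_j' = \bigoplus_{i=1}^{m}(n_i\delta^{t_i} \odot^\sharp n_j'\delta^{t_j'})$, and plug this into the previous identity to obtain the claimed $p \odot^\sharp p' = \bigwedge_{j=1}^{m'}\bigoplus_{i=1}^{m}(n_i\delta^{t_i}\odot^\sharp n_j'\delta^{t_j'})$.

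I do not anticipate a serious obstacle, since both ingredients are available: distributivity of $\odot$ over $\oplus$ is immediate from the coefficient-wise definition, and the polynomial-monomial rule is Lemma~\ref{le:odot_sharp_poly_mono}. The one point deserving care is the reversed order in $\Sigma$, so I would double-check that "residual $=$ greatest $x$ with $\Pi_{p'}(x)\preceq p$" is consistent throughout, and in particular that the passage from a $\bigoplus$ over the summands of $p'$ to a $\bigwedge$ over the factors is produced correctly by the two complementary halves of Remark~\ref{rem:oplus_property}. A secondary caution is the treatment of $\pm\infty$ coefficients and exponents inside the monomial rule, but these are absorbed by the conventions already fixed for $\odot^\sharp$ on monomials, so they require no separate handling here.
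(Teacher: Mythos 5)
Your proposal is correct and follows essentially the same route as the paper's own proof: decompose $p'$ into its monomials, use distributivity of $\odot$ over $\oplus$ together with Remark~\ref{rem:oplus_property} to split the inequality $p'\odot x \preceq p$ into a system, convert each line via Remark~\ref{rem:residual_property} into $x \preceq p\odot^\sharp n_j'\delta^{t_j'}$, recombine with the $\wedge$-form of Remark~\ref{rem:oplus_property}, and finish with Lemma~\ref{le:odot_sharp_poly_mono}. The only cosmetic difference is that you verify distributivity of $\odot$ over $\oplus$ coefficient-wise inline, where the paper simply invokes it.
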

\begin{proof}
Let $s''=p\odot^\sharp p'$ be the greatest counter satisfying
\[
    p'\odot s'' \preceq p, \quad \text{i.e.,} \quad \left(\bigoplus_{j=1}^{m'}n_{j}'\delta^{t_{j}'}\right) \odot s'' \preceq p;
\]
according to the distributivity of $\odot$ over $\oplus$, the latter inequality is equivalent to
\[
    \bigoplus_{j=1}^{m'} (n_{j}'\delta^{t_j'}\odot s'') \preceq p.
\]
From Remark~\ref{rem:oplus_property}, the inequality can be rewritten as the system
\[
   \left\{
       \begin{array}{l}
            n_1'\delta^{t_1'} \odot s'' \preceq p\\
            n_2'\delta^{t_2'} \odot s'' \preceq p\\
            \dots\\
            n_m'\delta^{t_m'} \odot s'' \preceq p
       \end{array}
   \right. ,
\]
which, from Remark~\ref{rem:residual_property}, is equivalent to
\[
   \left\{
       \begin{array}{l}
            s'' \preceq p\odot^\sharp n_1'\delta^{t_1'}\\
            s'' \preceq p\odot^\sharp n_2'\delta^{t_2'}\\
            \dots\\
            s'' \preceq p\odot^\sharp n_m'\delta^{t_m'} 
       \end{array}
   \right. .
\]
Using again Remark~\ref{rem:oplus_property}, we get
\[
    s'' \preceq \bigwedge_{j=1}^{m'} \left( p \odot^\sharp n_j'\delta^{t_j'}\right) = \bigwedge_{j=1}^{m'} \left(\left( \bigoplus_{i = 1}^m n_i\delta^{t_i}\right) \odot^\sharp n_j' \delta^{t_j'}\right);
\]
finally, thanks to Lemma~\ref{le:odot_sharp_poly_mono}, this last expression can be rewritten as the formula in the statement of the proposition, since from the definition of residual we can substitute relation "$\preceq$" with "$=$".
\end{proof}

As shown in the next example, because of the non-increasing property of counters, the computation $p\odot^\sharp p'$ can occasionally be simplified.
If $t_i\geq t_j'$ for a pair of monomials, then we can continue with $j+1$ and setting $i$ back to $1$.
The rest of the terms would namely all have $+\infty$ as their $\delta$-exponent, with (in the standard meaning) increasing coefficients.

\begin{example}
The residual $p \odot^\sharp p'$ with $p = 2\delta^1 \oplus 3\delta^5 \oplus 5\delta^7$, $p' = 0\delta^2 \oplus 2\delta^5 \oplus 4\delta^{+\infty}$ can be determined by
\[
   \begin{array}{rl}
       p\odot^\sharp p' =& (2\delta^1 \oplus 3\delta^{+\infty} \oplus \cancel{5\delta^{+\infty}}) \wedge (0\delta^1 \oplus 1\delta^{+\infty} \oplus \cancel{3\delta^{+\infty}}) \\
       &\wedge (-2\delta^1 \oplus -1\delta^5 \oplus 1\delta^7)\\
       =& 2\delta^1 \oplus 3\delta^7.
   \end{array}
\]
\end{example}

\subsubsection{The residual of the Hadamard product for periodic series}

Before considering operation $\odot^\sharp$ between two periodic series, it is convenient to prove the following lemma, which concerns the series whose coefficients are computed by subtracting (in the standard sense) element-wise coefficients of two counters.
We remark that the resulting series is not necessarily a counter.

\begin{proposition}[Difference series]\label{pr:difference_series}
Let $s,s'\in\Sigma$ be periodic series such that there is no $t\in\Z$ for which $s(t)=s'(t)\in\{-\infty,+\infty\}$.
The series $\bar{s}$ defined by $\bar{s}(t) = s(t) - s'(t)$ for all $t\in\Z$ is periodic, with periodic pattern starting at time $\bar{t}_\textup{p}$ and throughput $\frac{\bar{\nu}}{\bar{\tau}}$, where
\[
    \bar{\tau} = \textup{lcm}(\tau,\tau'),\quad \bar{\nu} = \bar{\tau}\left(\frac{\nu}{\tau}-\frac{\nu'}{\tau'}\right),\quad \bar{t}_{\textup{p}} = \max(T_1,T'_1).
\]
\end{proposition}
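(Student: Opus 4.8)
The plan is to establish periodicity of $\bar{s}$ directly from the characterization in Proposition~\ref{pr:periodic_series}, mirroring the computation already carried out for the Hadamard product of periodic series but with a subtraction in place of an addition. Concretely, I would verify that for all $t \geq \bar{t}_\textup{p}$ and all $k \geq 0$ the identity $\bar{s}(t + k\bar{\tau}) = k\bar{\nu} + \bar{s}(t)$ holds, with $\bar{\tau}$, $\bar{\nu}$, and $\bar{t}_\textup{p}$ as stated; by Proposition~\ref{pr:periodic_series} this is exactly what it means for $\bar{s}$ to be periodic with the claimed pattern.

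First I would write $\bar{\tau} = \textup{lcm}(\tau,\tau') = \alpha\tau = \alpha'\tau'$ for positive integers $\alpha = \bar{\tau}/\tau$ and $\alpha' = \bar{\tau}/\tau'$. Then, for any $t \geq \bar{t}_\textup{p} = \max(T_1,T_1')$, so that $t$ already lies in the periodic regime of both $s$ and $s'$, and any $k \geq 0$, I would expand
\[
    \bar{s}(t+k\bar{\tau}) = s(t+k\alpha\tau) - s'(t+k\alpha'\tau') = (k\alpha\nu + s(t)) - (k\alpha'\nu' + s'(t)),
\]
using the periodicity of $s$ and $s'$. Regrouping yields $k(\alpha\nu - \alpha'\nu') + (s(t) - s'(t)) = k\bar{\nu} + \bar{s}(t)$, where $\bar{\nu} = \alpha\nu - \alpha'\nu' = \bar{\tau}(\nu/\tau - \nu'/\tau') \in \Z$. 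Invoking Proposition~\ref{pr:periodic_series} then gives the stated $\bar{\tau}$, $\bar{\nu}$, and $\bar{t}_\textup{p}$.

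The genuinely new point compared to the additive (Hadamard product) case — and where I expect the only real care to be needed — is the well-definedness of the subtractions: the regrouping step is immediate in standard arithmetic as long as the quantities are finite, but it must be checked whenever $s(t)$ or $s'(t)$ takes a value in $\{-\infty,+\infty\}$. This is precisely the purpose of the hypothesis that no $t$ satisfies $s(t) = s'(t) \in \{-\infty,+\infty\}$: it excludes the indeterminate forms $(+\infty)-(+\infty)$ and $(-\infty)-(-\infty)$, guaranteeing that $\bar{s}(t)$ is well-defined for every $t \in \Z$. For finite $\nu,\nu',\tau,\tau'$ the coefficients $s(t),s'(t)$ remain finite throughout the periodic regime and the computation above is routine; as in the monomial propositions, the remaining cases in which a coefficient equals $\pm\infty$ can be treated separately by inspection, using the hypothesis to rule out the ill-defined combinations and the behaviour of $\pm\infty$ under standard addition to confirm that the periodicity identity still holds.
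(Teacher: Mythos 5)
Your proof is correct and follows essentially the same route as the paper: write $\bar{\tau} = \alpha\tau = \alpha'\tau'$, expand $\bar{s}(t+k\bar{\tau})$ using the periodicity of $s$ and $s'$ for $t \geq \max(T_1,T_1')$, regroup to get $k\bar{\nu} + \bar{s}(t)$, and invoke Proposition~\ref{pr:periodic_series}. Your extra paragraph on well-definedness of the subtraction under the no-indeterminate-forms hypothesis is a point the paper's proof leaves implicit, and it is a welcome addition rather than a deviation.
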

\begin{proof}
Let $\bar{\tau} = \mbox{lcm}(\tau,\tau')=\alpha\tau=\alpha'\tau'$.
Then, since counters $s$ and $s'$ are periodic, for all $t\geq\max(T_1,T_1')=\bar{t}_\textup{p}$, $k\geq 0$,
\[
    \begin{array}{rl}
        \bar{s}(t+k\bar{\tau}) &= s(t+k\bar{\tau}) - s'(t+k\bar{\tau}) = s(t+k\alpha\tau) - s'(t+k\alpha'\tau')\\
        &= k\alpha\nu + s(t) - k\alpha'\nu' - s'(t) = k(\alpha\nu-\alpha'\nu')+\bar{s}(t).
    \end{array}
\]
Therefore, according to Proposition~\ref{pr:periodic_series}, series $\bar{s}$ is periodic with $\bar{\tau}$, $\bar{\nu}$, and $\bar{t}_\textup{p}$ as in the statement of the proposition.
\end{proof}

\begin{proposition}[$\odot^\sharp$ for periodic series]
Let $\bar{s}$ be the difference series defined as in Proposition~\ref{pr:difference_series}, and let $\bar{\nu}>0$.
Series $s'' = s\odot^\sharp s'$ is periodic, with periodic pattern starting at the latest at time $t_\textup{p}'' = \bar{t}_\textup{p}+ \kappa \tau''$ and throughput $\frac{\nu''}{\tau''}$, where
\[
    \kappa=1+\max\left(0,\ceil{\frac{\displaystyle\left(\max_{i=\bar{t}_1}^{\bar{t}_\textup{p}-1}\bar{s}(i)\right)-\left(\max_{j=\bar{t}_\textup{p}}^{\bar{t}_\textup{p}+\tau''-1}\bar{s}(j)\right)}{\nu''}}\right),
\] 
\[
   \bar{t}_1 = \min(t_1,t_1'), \quad \tau'' = \textup{lcm}(\tau,\tau'), \quad \mbox{and}\quad \nu'' = \tau''\left(\frac{\nu}{\tau}-\frac{\nu'}{\tau'}\right).
\]
\end{proposition}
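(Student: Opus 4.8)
The plan is to reduce everything to the analysis of the running maximum of the difference series $\bar s$ of Proposition~\ref{pr:difference_series}. By Remark~\ref{rem:difference_and_residuals} (applied with $a=s'$, $y=s$), the counter $s''=s\odot^\sharp s'$ is the $\preceq$-greatest counter satisfying $s''\preceq\bar s$. Unfolding the reversed order of $\Zminbar$, $x\preceq\bar s$ means $x(t)\ge\bar s(t)$ (standard order) for all $t$, and being $\preceq$-greatest means being pointwise-least; since counters are non-decreasing in the standard order, $s''$ is therefore the pointwise-least non-decreasing majorant of $\bar s$, i.e.\
\[
    s''(t)=\max_{\tau\le t}\bar s(\tau)\qquad\forall t\in\Z .
\]
The first step is to justify this closed form: a counter $x$ with $x\preceq\bar s$ satisfies $x(t)\ge x(\tau)\ge\bar s(\tau)$ for every $\tau\le t$, hence $x(t)\ge\max_{\tau\le t}\bar s(\tau)$; conversely the running maximum is itself a non-decreasing majorant of $\bar s$, so it is the smallest one.

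Next I would import the data of $\bar s$ from Proposition~\ref{pr:difference_series}: it is periodic with period $\tau''=\bar\tau=\textup{lcm}(\tau,\tau')$, per-period increment $\nu''=\bar\nu>0$ (this is where the hypothesis $\bar\nu>0$ enters), and regime starting at $\bar t_\textup{p}=\max(T_1,T_1')$. A useful preliminary remark is that $s$ and $s'$ equal their first coefficients $n_1,n_1'$ for all finite $t\le\bar t_1=\min(t_1,t_1')$, so $\bar s$ is constant there; consequently the running maximum may be started at $\bar t_1$, and for $t\ge\bar t_\textup{p}$ it splits as $s''(t)=\max(Q,R(t))$, where $Q=\max_{\bar t_1\le i\le\bar t_\textup{p}-1}\bar s(i)$ is the transient maximum and $R(t)=\max_{\bar t_\textup{p}\le\tau\le t}\bar s(\tau)$ is the running maximum over the periodic regime.

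The computational core is to prove that $R$ itself satisfies the periodic law after the first full period. Writing $t=\bar t_\textup{p}+a+k\tau''$ with $0\le a\le\tau''-1$ and using $\bar s(\bar t_\textup{p}+b+j\tau'')=\bar s(\bar t_\textup{p}+b)+j\nu''$, a short computation gives, for $k\ge1$,
\[
    R(t)=k\nu''+\max\Bigl(P-\nu'',\ \max_{0\le b\le a}\bar s(\bar t_\textup{p}+b)\Bigr),\qquad P=\max_{\bar t_\textup{p}\le j\le\bar t_\textup{p}+\tau''-1}\bar s(j),
\]
where the term $P-\nu''$ records the contribution of the last complete period (it dominates because $\nu''>0$). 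Hence $R(t+\tau'')=R(t)+\nu''$ for all $t\ge\bar t_\textup{p}+\tau''$. Once $R(t)\ge Q$ we get $s''(t)=R(t)$ and $s''(t+\tau'')=\max(Q,R(t)+\nu'')=R(t)+\nu''=s''(t)+\nu''$, so $s''$ obeys the periodic law with $\tau''$ and $\nu''$. Since $R$ is non-decreasing and $R(\bar t_\textup{p}+\kappa\tau'')\ge P+(\kappa-1)\nu''$, it suffices to pick the least $\kappa\ge1$ with $P+(\kappa-1)\nu''\ge Q$, namely $\kappa=1+\max\bigl(0,\lceil(Q-P)/\nu''\rceil\bigr)$; then $t_\textup{p}''=\bar t_\textup{p}+\kappa\tau''$ is the announced (upper-bound) starting time.

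I expect the main obstacle to be the bookkeeping of the transient against the periodic part: one must use $\nu''>0$ to argue that, once $R(t)\ge Q$, the running maximum keeps climbing and never again falls below $Q$, so that the $\max(Q,\cdot)$ becomes inert for all later times; and one must check that the bound $R(\bar t_\textup{p}+\kappa\tau'')\ge P+(\kappa-1)\nu''$ is read off the \emph{last complete} period rather than the partial one, which is exactly what fixes $\kappa$. The degenerate $\pm\infty$ situations (coefficients equal to $-\infty$ or $+\infty$, where $\bar s$ may leave $\Z$) are excluded on the relevant range by the hypothesis of Proposition~\ref{pr:difference_series} and are otherwise handled by the conventions already fixed for the monomial rules.
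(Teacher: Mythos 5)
Your proposal is correct and takes essentially the same route as the paper's proof: both characterize $s''$ as the running $\wedge$ (i.e., the running maximum in standard algebra) of $\bar{s}$ starting from $\bar{t}_1$, split it into the transient maximum over $[\bar{t}_1,\bar{t}_\textup{p}-1]$ and the periodic-regime part, use $\bar{\nu}>0$ to show the periodic part eventually dominates, and arrive at the identical formula for $\kappa$ before verifying the periodic law $s''(t+k'\tau'')=k'\nu''+s''(t)$. Your explicit formula for $R(t)$ is just a standard-algebra repackaging of the paper's block decomposition and cancellation of dominated period blocks.
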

\begin{proof}
Series $s''$ is the greatest counter which is less than or equal to $\bar{s}$.
In other words, $s''$ is the greatest series that satisfies the following two specifications:
\[
    s'' \preceq \bar{s}, \quad\mbox{and}\quad s''(t)\preceq \bigwedge_{i\leq t}s''(i) \quad \forall t\in\Z.
\]
Combining the two properties, we get that $s''$ is the greatest series satisfying
\[
    s''(t) \preceq \bigwedge_{i\leq t} \bar{s}(i)\quad \forall t\in\Z;
\]
equivalently, we can define $s''$ as the series such that
\[
    s''(t) = \bigwedge_{i\leq t} \bar{s}(i) = \bigwedge_{i\leq t}s(i) - s'(i) \quad \forall t\in\Z.
\]
Let $\bar{t}_1 = \min(t_1,t'_1)$; since $s$ and $s'$ are constant for all $t\leq \bar{t}_1$, we have
\[
    s''(t) = \bigwedge_{i = \bar{t}_1}^{t} \bar{s}(i) \quad \forall t\in\Z.
\]

To show that $s''$ is periodic, we now look for the time at which the periodic pattern of $s''$ starts.
Let us start by considering a time $t = \bar{t}_\textup{p} + k \bar{\tau} - 1$, where $k > 0$; then,
\[
    s''(t) = \bigwedge_{i = \bar{t}_1}^{\bar{t}_\textup{p}-1} \bar{s}(i) \wedge \bigwedge_{i = \bar{t}_\textup{p}}^{\bar{t}_\textup{p}+\bar{\tau}-1} \bar{s}(i) \wedge \bigwedge_{i = \bar{t}_\textup{p}+\bar{\tau}}^{\bar{t}_\textup{p}+2\bar{\tau}-1} \bar{s}(i) \wedge \dots \wedge \bigwedge_{i = \bar{t}_\textup{p}+(k-1)\bar{\tau}}^{\bar{t}_\textup{p}+k\bar{\tau}-1} \bar{s}(i).
\]
Due to the periodicity of $\bar{s}$, the distributivity of $\otimes$ over finite $\wedge$ in $\Zminbar$ when none of the operands is $\pm\infty$, and the fact that $\bar{\nu}>0$, the latter expression can be rewritten as
\begin{align}\label{eq:s_second_periodic}
    s''(t) &= \bigwedge_{i = \bar{t}_1}^{\bar{t}_\textup{p}-1} \bar{s}(i) \wedge \cancel{\bigwedge_{i = \bar{t}_\textup{p}}^{\bar{t}_\textup{p}+\bar{\tau}-1} \bar{s}(i)} \wedge \cancel{\bar{\nu}\left(\bigwedge_{i = \bar{t}_\textup{p}}^{\bar{t}_\textup{p}+\bar{\tau}-1} \bar{s}(i)\right)} \wedge \dots \wedge \bar{\nu}^{k-1}\left(\bigwedge_{i = \bar{t}_\textup{p}}^{\bar{t}_\textup{p}+\bar{\tau}-1} \bar{s}(i)\right) \nonumber\\
    &= \bigwedge_{i = \bar{t}_1}^{\bar{t}_\textup{p}-1} \bar{s}(i) \wedge \bar{\nu}^{k-1}\left(\bigwedge_{i = \bar{t}_\textup{p}}^{\bar{t}_\textup{p}+\bar{\tau}-1} \bar{s}(i)\right).
\end{align}
Since $\bar{\nu}>0$, the second term dominates the first one for $k$ large enough.
In case the least integer $k$ for which this happens is positive, this value can be easily found from the expression above, and coincides with 
\begin{equation}\label{eq:kappa_positive}
    1+\ceil{\frac{\max_{i=\bar{t}_1}^{\bar{t}_\textup{p}-1}\bar{s}(i)-\max_{j=\bar{t}_\textup{p}}^{\bar{t}_\textup{p}+\bar{\tau}-1}\bar{s}(j)}{\bar{\nu}}},
\end{equation}
where $\ceil{x}$ is the least integer greater than or equal to $x$ (in the standard sense).
However, if the least integer $k$ for which the second term in~\eqref{eq:s_second_periodic} dominates the first one is non-positive, the formula above may produce a wrong result, as it has been derived under the assumption that $k>0$.
In any case, taking $k$ equal to the maximum between~\eqref{eq:kappa_positive} and $1$ guarantees that the second term in~\eqref{eq:s_second_periodic} dominates the first one.
Let us define $\kappa$ as such value of $k$; in standard algebra, we get
\[
    \kappa=1+\max\left(0,\ceil{\frac{\max_{i=\bar{t}_1}^{\bar{t}_\textup{p}-1}\bar{s}(i)-\max_{j=\bar{t}_\textup{p}}^{\bar{t}_\textup{p}+\bar{\tau}-1}\bar{s}(j)}{\bar{\nu}}}\right).
\]

We now have everything needed to show that $s''$ is periodic. 
Take a time $t$ such that $\bar{t}_\textup{p}+\kappa\bar{\tau} \leq t < \bar{t}_\textup{p}+(\kappa+1)\bar{\tau}$, say $t=\bar{t}_\textup{p}+\kappa\bar{\tau} + h$ with $h\in \{0,\ldots,\bar{\tau}-1\}$; using~\eqref{eq:s_second_periodic} and $k=\kappa$ with $\kappa$ as defined above, we get
\begin{align*}
    s''(t) &= \cancel{\bigwedge_{i = \bar{t}_1}^{\bar{t}_\textup{p}-1}s(t)} \wedge \bar{\nu}^{\kappa-1}\left(\bigwedge_{i = \bar{t}_\textup{p}}^{\bar{t}_{\textup{p}}+\bar{\tau}-1} \bar{s}(i) \right) \wedge \bar{\nu}^{\kappa}\left(\bigwedge_{i = \bar{t}_\textup{p}}^{\bar{t}_\textup{p}+h} s(i)\right)\\
    &= \bar{\nu}^{\kappa-1}\left(\bigwedge_{i = \bar{t}_\textup{p}}^{\bar{t}_{\textup{p}}+\bar{\tau}-1} \bar{s}(i) \right) \wedge \bar{\nu}^{\kappa}\left(\bigwedge_{i = \bar{t}_\textup{p}}^{\bar{t}_\textup{p}+h} s(i)\right),
\end{align*}
and, for all $k'\geq 0$,
\begin{align*}
    s''(t+k'\bar{\tau}) = \bar{\nu}^{\kappa+k'-1}\left(\bigwedge_{i = \bar{t}_\textup{p}}^{\bar{t}_{\textup{p}}+\bar{\tau}-1} \bar{s}(i) \right) \wedge \bar{\nu}^{\kappa+k'}\left(\bigwedge_{i = \bar{t}_\textup{p}}^{\bar{t}_\textup{p}+h} s(i)\right) = \bar{\nu}^{k'} s''(t),
\end{align*}
which, in standard algebra, reads $s''(t+k'\bar{\tau}) = k'\bar{\nu}+s''(t)$.
This proves that $s''$ is periodic, with periodic pattern starting at the latest at time $\bar{t}_\textup{p}+\kappa\bar{\tau}$ and throughput $\frac{\nu''}{\tau''}$, where $\tau'' = \bar{\tau}$ and $\nu'' = \bar{\nu}$.
\end{proof}

\begin{example}
Consider $s = -1\delta^0 \oplus (5\delta^2)(2\delta^1)^*$ and $s' = -1\delta^{-5} \oplus (3\delta^0)(1\delta^2)^*$ from $\Sigma$. 
We determine $\bar{t}_\textup{p} = 2,\, \bar{\tau} = \tau'' = 2$ and $\bar{\nu} = \nu'' = 3$. 
To compute $\kappa$, the series are expanded up to $\bar{t}_\textup{p} + \bar{\tau}-1 = 3$ first, yielding $-1\delta^0 \oplus 5\delta^2 \oplus 7\delta^3$ and $-1\delta^{-5} \oplus 3\delta^0 \oplus 4\delta^2 \oplus 5\delta^3$, respectively. 
With $\bar{t}_1 = -5$ and $\bar{t}_\textup{p} + \bar{\tau} - 1 = 3$, this leads to $\kappa = 1 + \ceil{\frac{1-2}{3}} = 1$. 
The periodic pattern of $s''$ therefore starts at $t''_\textup{p} = \bar{t}_\textup{p} + \kappa\tau'' = 4$ at the latest. 
The final result is $s'' = s\odot^\sharp s' = 0\delta^0 \oplus (1\delta^2 \oplus 2\delta^3)(3\delta^2)^*$. 
In this case, the periodic behavior of $s''$ in fact starts at time $2 < t_\textup{p}''$. 
\end{example}

If $\bar{\nu} < 0$, then the coefficients of $s'$ will, on average, increase faster than those of $s$ (in the standard sense), from time $\bar{t}_\textup{p}$. 
Being the least counter greater than or equal to $\bar{s}$, $s'' = s\odot^\sharp s'$ will therefore remain constant after time $\bar{t}_\textup{p} + \bar{\tau} - 1$. 
This means that the information contained in $s - s'$ up to $\bar{t}_\textup{p} + \bar{\tau} - 1$ is sufficient to determine $s''$. 
Consequently, it suffices to expand $s$ and $s'$ up to $\bar{t}_\textup{p} + \bar{\tau} - 1$, compute the Hadamard residual of the obtained polynomials, and set the last $\delta$-exponent of the result to $+\infty$. 
The same approach can be taken for $\bar{\nu} = 0$, since the coefficients of $s$ in this case will, on average, change by the same amount as those of $s'$. 

Special cases of this operation between a polynomial and a periodic series again have to be examined. 
Considering $s\odot^\sharp p'$, if $t'_m = +\infty$, then $p'$ can be written as $p' = (\bigoplus_{j=1}^{m'-1} n'_i\delta^{t'_i})\oplus n'_m\delta^{1+t_{m-1}'}(0\delta^1)^*$, similarly to the case for the Hadamard product between a series and a polynomial. 
The rest of the algorithm can then be applied as previously described. 
In the computation of $p\odot^\sharp s'$, if the case $t_m = +\infty$ occurs, then the result can also be described by a polynomial. 
The series $s'$ is expanded up to $1 + t_{m-1}$, since $p\odot^\sharp s'$ will be constant from this time on. 
The operation $\odot^\sharp$ is then performed, and the last $\delta$-exponent of the result can be set to $+\infty$, without having to determine the value of $\kappa$ (this approach is analogous to that in case $\bar{\nu} < 0$).

For $p\odot^\sharp s'$ (resp. $s\odot^\sharp p'$), if $t_m \neq +\infty$ (resp. $t_m' \neq +\infty$), the value of $\kappa$ does not have to be determined, either, since the result can also be given as a polynomial (where the last $\delta$-exponent is $+\infty$ for $s\odot^\sharp p'$). 
Expanding $s'$ (resp. $s$) up to $t_m+1$ (resp. $t_m'$) to obtain $\tilde{p}'$ (resp. $\tilde{p}$) is sufficient, and the result can then be obtained by $p\odot^\sharp \tilde{p}'$ (resp. $\tilde{p} \odot^\sharp p'$). 
Note that $s'$ needs to be expanded up to $t_m+1$ and not only to $t_m$, otherwise the $\delta$-exponent of the last monomial of the result would be $+\infty$ instead of $t_m$ (see the formula of $\odot^\sharp$ for monomials when $\tau = \tau'$).

\subsection{The dual residual of the Hadamard product}

\subsubsection{The dual residual of the Hadamard product for monomials}

\begin{proposition}[$\odot^\flat$ for monomials]
The following rule holds:
\[
    r\odot^\flat r' = 
    \begin{dcases}
        (\nu-\nu')\delta^{\tau} & \textup{if }\tau\leq\tau'\\
        \textup{undefined} & \textup{otherwise}
    \end{dcases},
\]
with the convention that $+\infty-\infty=+\infty$.
\end{proposition}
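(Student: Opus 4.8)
The plan is to follow the same template as the proof of $\odot^\sharp$ for monomials, but with the direction of optimization reversed to reflect the passage from residual to dual residual. First I would recall, using Proposition~\ref{pr:dual_residual} and Remark~\ref{rem:difference_and_residuals}, that $r\odot^\flat r'$ is, when it exists, the least counter $x\in\Sigma$ (in the order $\preceq$ of $\Sigma$) satisfying $r'\odot x\succeq r$. Because the order of $\Zminbar$, and hence of $\Sigma$, is reversed with respect to the standard one, finding this least counter amounts, in standard algebra, to \emph{maximizing} each coefficient $x(t)$ subject to $r'(t)+x(t)\leq r(t)$ for all $t\in\Z$ and to the counter (non-decreasing) constraint $x(t)\leq x(t+1)$. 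As in the earlier monomial propositions, I would treat only $\nu,\nu',\tau,\tau'\in\Z$, noting that the infinite cases are verified separately by inspection.

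Next I would write out the mappings of $r$ and $r'$ explicitly and split into the two cases of the statement. When $\tau\leq\tau'$, the only active constraint occurs for $t\leq\tau$, where $r(t)=\nu$ and $r'(t)=\nu'$ give $x(t)\leq\nu-\nu'$; for $\tau<t\leq\tau'$ the right-hand side $r(t)=+\infty$ renders the constraint vacuous, and for $t>\tau'$ both sides are $+\infty$. Maximizing each $x(t)$ then forces $x(t)=\nu-\nu'$ for $t\leq\tau$ and $x(t)=+\infty$ for $t>\tau$, which is precisely the monomial $(\nu-\nu')\delta^{\tau}$. I would additionally observe that this candidate is automatically non-decreasing, so the counter constraint is satisfied and the pointwise maximum is in fact attained by a single counter, confirming that the least solution exists and equals $(\nu-\nu')\delta^{\tau}$.

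The step I expect to be the crux is the case $\tau>\tau'$, which should produce the "undefined" branch. Here, for $\tau'<t\leq\tau$, one has $r'(t)=+\infty$ while $r(t)=\nu$ is finite, so the constraint becomes $+\infty+x(t)\leq\nu$; since $+\infty$ is absorbing for $\otimes$ in $\Zminbar$, the left-hand side equals $+\infty$ for every choice of $x(t)$, and $+\infty\leq\nu$ is false. Therefore no feasible $x$ exists, the set $\{x\in\Sigma\mid r'\odot x\succeq r\}$ is empty, and the least solution is undefined. I would close by remarking that this is exactly the obstruction identified after Proposition~\ref{pr:dual_residual} (the value $a(t)=+\infty$ with $y(t)\neq+\infty$, taking $a=r'$ and $y=r$), so that no further argument is needed beyond this observation.
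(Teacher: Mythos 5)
Your proof is correct and takes essentially the same route as the paper's: the same restriction to finite $\nu,\nu',\tau,\tau'$ with infinite cases checked by inspection, the same reformulation of $r'\odot x\succeq r$ as a coefficient-wise maximization problem in standard algebra with the constraint $x(t)\leq x(t+1)$, and the same two cases yielding $(\nu-\nu')\delta^{\tau}$ when $\tau\leq\tau'$ and undefinedness when $\tau>\tau'$. The only cosmetic difference is in the second case, where the paper appeals to Proposition~\ref{pr:dual_residual} (observing that $r$ lies outside the admissible codomain of $\Pi_{r'}$) while you show directly that the constraint set is empty; these are the same observation, since the paper's justification of that proposition is precisely the infeasibility of $+\infty + x(t)\leq\nu$.
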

\begin{proof}
As follows, we only consider $\nu,\nu',\tau,\tau'\in\Z$; the cases in which $\nu,\nu',\tau,\tau'\in\{-\infty,+\infty\}$ have to be considered separately, but can be easily checked by inspection.
Counter $r\odot^\flat r'$ is the least solution of $r'\odot x\succeq r$; we distinguish two cases.

\textbf{Case 1: $\tau\leq\tau'$.}
Equivalently, $r\odot^\flat r'$ is the counter that solves the following multi-objective optimization problem (written in standard algebra for simplicity; consider always $+\infty-\infty=+\infty$)\footnote{Note that the existence and uniqueness of the solution to this and the following multi-objective optimization problem is guaranteed by the fact that $\Pi_a:\D_a\rightarrow\CCC_a$, $x \mapsto a\odot x$ is dually residuated for all $a\in\Sigma$, cf. Proposition~\ref{pr:dual_residual}.}:
\[
    \begin{array}{rll}
    \max        & x(t) &  \forall t\in\Z, \\
    \mbox{s.t.} & \nu' + x(t) \leq \nu  & \forall t \leq \tau,\\
                & \nu' + x(t) \leq +\infty & \forall \tau < t \leq \tau',\\
                & +\infty + x(t) \leq +\infty & \forall \tau' < t,\\
                & x(t) \leq x(t+1) & \forall t\in\Z.
    \end{array}
\]
It is then clear that the least counter that satisfies $r'\odot x\succeq r$ has coefficients
\[
    x(t) = 
    \begin{dcases}
        \nu - \nu' & \mbox{if } t \leq \tau,\\
        +\infty & \mbox{otherwise}
    \end{dcases},
\]
which corresponds to monomial $(\nu-\nu')\delta^{\tau}$.

\textbf{Case 2: $\tau>\tau'$.}
Note that, for all $\tau' \leq t < \tau$, $r'(t) = +\infty$ and $r(t)\in\Z$.
Thus, from Proposition~\ref{pr:dual_residual}, $\Pi_{r'}^\flat(r) = r \odot^\flat r'$ is not defined, as $r$ does not belong to its domain $\CCC_{r'} = \{y\in\Sigma\ | \ y(t) = +\infty\ \forall t\in\Z\ \mbox{such that}\ r'(t)\in\{-\infty,+\infty\}\}$.
Indeed, as discussed in Section~\ref{se:2}, the Hadamard product is not dually residuated in this case.
\end{proof}

\subsubsection{The dual residual of the Hadamard product for polynomials}

Before giving the rule for computing $\odot^\flat$ on polynomials, we need to consider the special case in which the left operand is a monomial and the right operand a polynomial. 

\begin{lemma}[$\odot^\flat$ between a monomial and a polynomial]\label{le:odot_flat_mono_poly}
The following rule holds:
\[
    r\odot^\flat p = \nu\delta^\tau \odot^\flat \left(\bigoplus_{i=1}^m n_i\delta^{t_i}\right)= 
    \begin{dcases}
        \bigwedge_{i=1}^m \left(\nu\delta^\tau\odot^\flat n_i\delta^{t_i}\right) & \textup{if } \tau \leq t_m\\
        \textup{undefined} & \textup{otherwise},
    \end{dcases}
\]
where undefined intermediate computations are to be ignored.
\end{lemma}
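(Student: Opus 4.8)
The plan is to follow the same template as the proofs of the monomial rules for $\odot^\flat$: translate the defining property of the dual residual into a pointwise multi-objective optimization problem in standard algebra, solve it explicitly, and then verify that the closed-form solution agrees with the right-hand side of the claimed identity. Throughout I restrict to finite coefficients $n_i,\nu\in\Z$ and finite exponents, treating the $\pm\infty$ situations by inspection as in the earlier proofs. I would first dispose of the "otherwise" branch: if $\tau>t_m$, then for every $t$ with $t_m<t\leq\tau$ we have $p(t)=+\infty$ while $r(t)=\nu\in\Z$, so $r\notin\CCC_p=\{y\in\Sigma\mid y(t)=+\infty\ \forall t \text{ with } p(t)\in\{-\infty,+\infty\}\}$. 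By Proposition~\ref{pr:dual_residual}, $\Pi_p^\flat(r)=r\odot^\flat p$ is then undefined, exactly as in Case~2 of the monomial proof, matching the claim.

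Now assume $\tau\leq t_m$. By definition, $r\odot^\flat p=\Pi_p^\flat(r)$ is the $\preceq$-least counter $x$ satisfying $p\odot x\succeq r$. Writing this pointwise and recalling that $\succeq$ in $\Sigma$ reverses the standard order, the condition $p\odot x\succeq r$ becomes $p(t)+x(t)\leq r(t)$ for all $t\in\Z$, and the $\preceq$-least counter corresponds to the pointwise standard-largest non-decreasing $x$. Splitting the right-hand side according to $r(t)=\nu$ for $t\leq\tau$ and $r(t)=+\infty$ for $t>\tau$, the counter $x=r\odot^\flat p$ solves
\[
    \begin{array}{rll}
    \max & x(t) & \forall t\in\Z,\\
    \text{s.t.} & p(t)+x(t)\leq\nu & \forall t\leq\tau,\\
         & p(t)+x(t)\leq+\infty & \forall t>\tau,\\
         & x(t)\leq x(t+1) & \forall t\in\Z.
    \end{array}
\]
The constraints for $t>\tau$ are vacuous. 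Among the remaining ones, since $p$ is non-decreasing and $x$ must be non-decreasing, the binding requirement at every $t\leq\tau$ is the one at $t=\tau$, giving $x(t)\leq\nu-p(\tau)$ for $t\leq\tau$. Hence the optimum is $x(t)=\nu-p(\tau)$ for $t\leq\tau$ and $x(t)=+\infty$ for $t>\tau$, i.e.\ the monomial $(\nu-p(\tau))\delta^\tau$, where $p(\tau)=n_{i_0}$ with $i_0=\min\{i:t_i\geq\tau\}$.

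It remains to check that this coincides with $\bigwedge_{i=1}^m(\nu\delta^\tau\odot^\flat n_i\delta^{t_i})$. By the monomial rule for $\odot^\flat$, the term $\nu\delta^\tau\odot^\flat n_i\delta^{t_i}$ equals $(\nu-n_i)\delta^\tau$ when $\tau\leq t_i$ and is undefined (hence dropped) when $\tau>t_i$; since $\tau\leq t_m$, the surviving index set $\{i:t_i\geq\tau\}$ is nonempty. Taking $\bigwedge$ (which on counters is the standard maximum of coefficients) over these indices yields $\left(\max_{i:\,t_i\geq\tau}(\nu-n_i)\right)\delta^\tau=(\nu-\min_{i:\,t_i\geq\tau}n_i)\delta^\tau=(\nu-n_{i_0})\delta^\tau$, using that the $n_i$ increase with $i$. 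This is exactly the optimum found above.

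The main obstacle is that, in contrast to the proof of $\odot^\sharp$ for polynomials, the polynomial $p$ here plays the role of the operand $a$ in $\Pi_a^\flat$, so the relevant constraint $\bigoplus_i(n_i\delta^{t_i}\odot x)\succeq r$ is a supremum lying above $r$; by Remark~\ref{rem:oplus_property} a supremum can dominate $r$ without each summand doing so, so this constraint does \emph{not} split term-by-term, and the purely residuation-algebraic decomposition available for $\odot^\sharp$ cannot be used. This is why I solve the pointwise problem directly and only afterwards reconcile it with the $\bigwedge$ expression; the bookkeeping of which monomial terms are undefined (precisely those with $t_i<\tau$) must then be handled carefully, using $\tau\leq t_m$ to guarantee that at least one term survives.
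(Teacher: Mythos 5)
Your proof is correct and takes essentially the same approach as the paper's: both dispose of the $\tau > t_m$ branch via the domain condition of Proposition~\ref{pr:dual_residual}, both solve the same pointwise maximization problem to obtain the monomial $(\nu - p(\tau))\delta^\tau$ (your $i_0$ is the paper's $j$), and both reconcile this with the $\bigwedge$ expression evaluated through the monomial rule with undefined terms dropped. The only difference is cosmetic — the paper writes the constraints interval-by-interval in terms of the indices $i$, while you compress them using $p(t)$ and monotonicity.
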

\begin{proof}
Let $j\in\{1,\ldots,m,m+1\}$ be the smallest index for which $t_j \geq \tau$, where $t_{m+1} = +\infty$.
We will assume that $\tau\leq t_m$; otherwise, the operation is clearly not defined -- the case in which $j = m+1$ will thus be ignored.
We also define $t_0 = -\infty$ for convenience.
Ignoring the intermediate computations that would lead to undefined results, i.e., $\nu\delta^\tau \odot^\flat n_i \delta^{t_i}$ for all $i < j$, we get
\begin{equation}\label{eq:mono_flat_poly}
    \bigwedge_{i=1}^m \left(\nu\delta^{\tau} \odot^\flat n_i\delta^{t_i}\right) = \bigwedge_{i=j}^m \left(\nu\delta^{\tau} \odot^\flat n_i\delta^{t_i}\right) = \bigwedge_{i=j}^m \left(\nu-n_i\right)\delta^\tau = (\nu-n_j)\delta^\tau.
\end{equation}
We need to prove that $r\odot^\flat p$ leads to the same expression.

From the definition of dual residual,
\[
    r\odot^\flat p = \bigwedge\{x\in\Sigma\ |\ p\odot x \succeq r\}.
\]
Equivalently, $r\odot^\flat p$ is the counter that solves the following multi-objective optimization problem (written in standard algebra for simplicity; consider always $+\infty-\infty=+\infty$):
\[
    \begin{array}{rll}
    \max        & x(t) &  \forall t\in\Z, \\
    \mbox{s.t.} & n_i + x(t) \leq \nu  & \forall i\in\{1,\ldots,j-1\},\ t_{i-1} < t \leq t_i,\\
                & n_j + x(t) \leq \nu & \forall t_{j-1} < t \leq \tau,\\
                & n_j + x(t) \leq +\infty & \forall \tau < t \leq t_{j},\\
                & n_i + x(t) \leq +\infty & \forall i\in\{j+1,\ldots,m\},\ t_{i-1} < t \leq t_{i},\\
                & +\infty + x(t) \leq +\infty & \forall t_m < t,\\
                & x(t) \leq x(t+1) & \forall t\in\Z.
    \end{array}
\]
Clearly, the optimization problem can be rewritten as 
\[
    \begin{array}{rll}
    \max        & x(t) &  \forall t\in\Z, \\
    \mbox{s.t.} & x(t) \leq \nu - n_i & \forall i\in\{1,\ldots,j-1\},\ t_{i-1} < t \leq t_i,\\
                & x(t) \leq \nu - n_j & \forall t_{j-1} < t \leq \tau,\\
                & x(t) \leq +\infty & \forall \tau < t,\\
                & x(t) \leq x(t+1) & \forall t\in\Z,
    \end{array}
\]
whose solution is, as $\nu-n_i<\nu-n_{i+1}$ for all $i$,
\[
    x(t) = 
    \begin{dcases}
        \nu - n_j & \forall t \leq \tau,\\
        +\infty & \forall \tau < t;
    \end{dcases}
\]
written using the $\delta$-transform, the above expression coincides with~\eqref{eq:mono_flat_poly}.
\end{proof}

\begin{proposition}[$\odot^\flat$ for polynomials]
The following rule holds:
\[
    p\odot^\flat p' = 
    \begin{dcases}
    \bigoplus_{i=1}^{m}\bigwedge_{j=1}^{m'} n_i\delta^{t_i}\odot^\flat n_j'\delta^{t_j'} & \textup{if }t_m \leq t_m',\\
    \textup{undefined} & \textup{otherwise},
    \end{dcases}
\]
where undefined intermediate computations are to be ignored.
\end{proposition}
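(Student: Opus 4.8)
The plan is to mirror the proof of the $\odot^\sharp$ rule for polynomials, but to dualize every step: replace $\preceq$ by $\succeq$, the residual by the dual residual, and — the essential difference — exchange the roles of the two operands. Where the $\odot^\sharp$ argument distributes the multiplier $p'$ across $\oplus$ and then residuates each resulting inequality, here it is the \emph{target} $p=\bigoplus_{i=1}^m n_i\delta^{t_i}$ that gets decomposed into monomials, while $p'$ stays intact as the fixed operand. Throughout I would work under the standing assumption that the operation is defined, i.e. $p\in\CCC_{p'}$, so that the dual-residuation machinery of Proposition~\ref{pr:dual_residual} and the dual Galois connection of Remark~\ref{rem:residual_property} are applicable; the delineation of exactly when this holds is postponed to the end.

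Starting from the definition of the dual residual, $s''=p\odot^\flat p'$ is the least counter satisfying $p'\odot s''\succeq p$. Applying the $\oplus$-form of Remark~\ref{rem:oplus_property}, iterated from two summands to $m$, I would rewrite this single inequality as the system $p'\odot s''\succeq n_i\delta^{t_i}$ for all $i\in\{1,\dots,m\}$. Next, the dual-residual equivalence $\Pi_{p'}(x)\succeq y\Leftrightarrow x\succeq\Pi_{p'}^\flat(y)$ from Remark~\ref{rem:residual_property} — that is, $p'\odot s''\succeq n_i\delta^{t_i}\Leftrightarrow s''\succeq n_i\delta^{t_i}\odot^\flat p'$ — converts each line of the system into a lower bound on $s''$. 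A second application of Remark~\ref{rem:oplus_property} recombines these lower bounds into the single constraint $s''\succeq\bigoplus_{i=1}^m\bigl(n_i\delta^{t_i}\odot^\flat p'\bigr)$, whose least solution is $\bigoplus_{i=1}^m\bigl(n_i\delta^{t_i}\odot^\flat p'\bigr)$. Substituting the rule of Lemma~\ref{le:odot_flat_mono_poly} for each monomial-over-polynomial dual residual, $n_i\delta^{t_i}\odot^\flat p'=\bigwedge_{j=1}^{m'}\bigl(n_i\delta^{t_i}\odot^\flat n_j'\delta^{t_j'}\bigr)$, then produces exactly the claimed formula $\bigoplus_{i=1}^m\bigwedge_{j=1}^{m'}\bigl(n_i\delta^{t_i}\odot^\flat n_j'\delta^{t_j'}\bigr)$; the prescription to discard undefined monomial-level computations inside each inner $\bigwedge_j$ is inherited verbatim from that lemma, since $n_i\delta^{t_i}\odot^\flat n_j'\delta^{t_j'}$ is undefined precisely when $t_i>t_j'$.

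The step needing genuine care — the one obstacle beyond routine dualization — is the global definedness condition. Lemma~\ref{le:odot_flat_mono_poly} guarantees that $n_i\delta^{t_i}\odot^\flat p'$ is defined if and only if $t_i$ does not exceed the largest $\delta$-exponent of $p'$ (denoted $t_m'$ in the statement); since $t_1<\cdots<t_m$, all $m$ factors are simultaneously defined iff $t_m\le t_m'$, which is exactly the condition displayed in the proposition, and if $t_m>t_m'$ the factor indexed by $i=m$ is already undefined, so the whole expression is undefined. I would close by confirming that this algebraic breakdown coincides with true non-dual-residuability in the sense of Proposition~\ref{pr:dual_residual}: when $t_m>t_m'$, the polynomial $p'$ has $p'(t)=+\infty$ for every $t>t_m'$ (its omitted trailing term being $+\infty\delta^{+\infty}$), whereas at $t=t_m$ one has $p(t_m)=n_m\neq+\infty$; hence there is a time with $p'(t)=+\infty$ and $p(t)\neq+\infty$, so $p\notin\CCC_{p'}$ and $p\odot^\flat p'$ is genuinely undefined, in full agreement with the monomial-level rule.
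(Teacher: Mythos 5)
Your proposal is correct and follows essentially the same route as the paper's own proof: decompose the target $p$ into monomials via Remark~\ref{rem:oplus_property}, convert each inequality $p'\odot s''\succeq n_i\delta^{t_i}$ into $s''\succeq n_i\delta^{t_i}\odot^\flat p'$ via Remark~\ref{rem:residual_property}, recombine, and finish with Lemma~\ref{le:odot_flat_mono_poly}. The only difference is cosmetic: where the paper dismisses the case $t_m>t_m'$ as ``obviously'' undefined, you verify it explicitly by showing $p\notin\CCC_{p'}$, which is a slightly more careful treatment of the same point.
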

\begin{proof}
We skip the case $t_m>t_m'$, as it obviously leads to an undefined result.
Let $s''=p\odot^\flat p'$ be the least counter satisfying
\[
    p'\odot s'' \succeq p, \quad i.e., \quad p' \odot s'' \succeq \bigoplus_{i=1}^m n_i\delta^{t_i},
\]
From Remark~\ref{rem:oplus_property}, the inequality can be rewritten as the system
\[
   \left\{
       \begin{array}{l}
            p' \odot s'' \succeq n_1\delta^{t_1}\\
            p' \odot s'' \succeq n_2\delta^{t_2}\\
            \dots\\
            p' \odot s'' \succeq n_m\delta^{t_m}
       \end{array}
   \right. ,
\]
which, from Remark~\ref{rem:residual_property}, is equivalent to
\[
   \left\{
       \begin{array}{l}
            s'' \succeq n_1\delta^{t_1}\odot^\flat p'\\
            s'' \succeq n_2\delta^{t_2}\odot^\flat p'\\
            \dots\\
            s'' \succeq n_m\delta^{t_m}\odot^\flat p' 
       \end{array}
   \right. .
\]
Using again Remark~\ref{rem:oplus_property}, we get
\[
    s'' \succeq \bigoplus_{i=1}^{m} \left( n_i\delta^{t_i} \odot^\flat p'\right) = \bigoplus_{i=1}^{m} \left(n_i \delta^{t_i} \odot^\flat \left( \bigoplus_{j = 1}^{m'} n'_j\delta^{t'_j}\right)\right);
\]
finally, thanks to Lemma~\ref{le:odot_flat_mono_poly}, this last expression can be rewritten as the formula in the statement of the proposition, since from the definition of dual residual we can substitute relation "$\succeq$" with "$=$".
\end{proof}

Observe that, when performing $\odot^\flat$ on polynomials, if for a pair $i,j$ the computation $n_i\delta^{t_i} \odot^\flat n_j'\delta^{t_j'}$ is defined, we can proceed with $i+1$ and $j=1$ for the next term.
Indeed, when $j$ is increased, the difference $n_i-n_j'$ will be decreasing (in the standard sense) as $p'$ is a counter and $i$ does not change.
Because of this, the remaining terms will not influence the result of the greatest lower bound, and can therefore be neglected.

\begin{example}
Let $p = -3\delta^{-1}\oplus -1\delta^{2}\oplus 3\delta^{4}$ and $p' = -1\delta^{1}\oplus e\delta^{3}\oplus 2\delta^{4}$. 
Then,
\begin{align*}
    p\odot^\flat p' &= (\underline{-2\delta^{-1}}\wedge -3\delta^{-1}\wedge -5\delta^{-1})\oplus (\underline{-1\delta^{2}}\wedge -3\delta^{2})\oplus (\underline{1\delta^{4}})\\
    &= -2\delta^{-1}\oplus -1\delta^{2}\oplus 1\delta^{4}.
\end{align*}
It can also be seen from this example that in each iteration of the greatest lower bound, there is no need to consider terms after we find a term which is defined, since the first term will always be the least one in the current iteration.
\end{example}

\subsubsection{The dual residual of the Hadamard product for periodic series}

\begin{proposition}[$\odot^\flat$ for periodic series]
Let $\bar{s}$ be the difference series defined as in Proposition~\ref{pr:difference_series}, and let $\bar{\nu}>0$.
Series $s'' = s\odot^\flat s'$ is periodic, with periodic pattern described by
\[
    \tau'' = \textup{lcm}(\tau,\tau'), \quad \nu'' = \tau''\left(\frac{\nu}{\tau}-\frac{\nu'}{\tau'}\right), \quad t''_\textup{p} = \max(T_1,T_1').
\]
\end{proposition}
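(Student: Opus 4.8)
The plan is to characterize $s''$ explicitly as a future-directed running infimum of the difference series $\bar{s}$, in direct analogy with the derivation already carried out for $\odot^\sharp$ on periodic series, and then to exploit the hypothesis $\bar{\nu}>0$ together with the periodicity of $\bar{s}$ (Proposition~\ref{pr:difference_series}) to read off the periodic parameters directly. The payoff of this approach is that the dual case turns out to be strictly simpler than the residual: no correction factor analogous to $\kappa$ is needed.

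First I would recall from Remark~\ref{rem:difference_and_residuals} that $s''=s\odot^\flat s'$ is the least counter (in the order of $\Sigma$) that is greater than or equal to $\bar{s}$. Being a counter, $s''$ is non-increasing in the dioid order, i.e. $s''(t)\succeq s''(i)$ for all $i\geq t$, so the counter constraint can be written as $s''(t)\succeq\bigoplus_{i\geq t}s''(i)$ for all $t\in\Z$. Imposing additionally $s''\succeq\bar{s}$ and minimality in $\preceq$ forces
\[
    s''(t) = \bigoplus_{i\geq t}\bar{s}(i)\quad\forall t\in\Z,
\]
which in $\Zminbar$ is the pointwise infimum $\min_{i\geq t}\bar{s}(i)$ taken over all future times. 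One checks that the right-hand side is itself a counter, since shrinking the index set $\{i\geq t\}$ as $t$ grows can only decrease the supremum $\bigoplus$; and it clearly dominates $\bar{s}$, so it is admissible and least. This is the exact dual of the expression $s''(t)=\bigwedge_{i\leq t}\bar{s}(i)$ obtained for $\odot^\sharp$, with the direction of the running bound reversed from the past to the future.

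Next I would restrict to $t\geq\bar{t}_\textup{p}=\max(T_1,T_1')$, so that every index $i\geq t$ lies in the periodic regime of $\bar{s}$. By Proposition~\ref{pr:difference_series}, $\bar{s}(t+k\bar{\tau})=k\bar{\nu}+\bar{s}(t)$ for all $k\geq 0$; since $\bar{\nu}>0$, among the values $\bar{s}(t+j),\,\bar{s}(t+j+\bar{\tau}),\,\bar{s}(t+j+2\bar{\tau}),\dots$ the earliest is the smallest, and the infimum over the infinite future collapses to one over a single period,
\[
    s''(t) = \bigoplus_{j=0}^{\bar{\tau}-1}\bar{s}(t+j)\quad\forall t\geq\bar{t}_\textup{p}.
\]
The periodicity recurrence is then immediate: applying the shift $t\mapsto t+\bar{\tau}$, using $\bar{s}(t+\bar{\tau}+j)=\bar{\nu}+\bar{s}(t+j)$ and distributivity of $\otimes$ over finite $\oplus$, gives $s''(t+\bar{\tau})=\bar{\nu}\otimes s''(t)$, i.e. $s''(t+\bar{\tau})=\bar{\nu}+s''(t)$ in standard algebra, for every $t\geq\bar{t}_\textup{p}$. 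By Proposition~\ref{pr:periodic_series} this establishes that $s''$ is periodic with $\tau''=\bar{\tau}=\textup{lcm}(\tau,\tau')$, $\nu''=\bar{\nu}=\tau''(\nu/\tau-\nu'/\tau')$, and periodic regime starting at $t''_\textup{p}=\bar{t}_\textup{p}=\max(T_1,T_1')$, as claimed.

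The step I expect to require the most care is the collapse of the future infimum to a single period, together with the accompanying claim that the periodic pattern begins already at $\bar{t}_\textup{p}$. The reason this case is cleaner than $\odot^\sharp$ is structural: the $\odot^\flat$ running infimum looks \emph{forward} into the periodic regime, where $\bar{\nu}>0$ guarantees the minimum is attained within the first period, whereas the $\odot^\sharp$ running infimum looks backward and can be dominated by large transient values of $\bar{s}$ before $\bar{t}_\textup{p}$, which is exactly what $\kappa$ had to absorb. I would make sure the hypothesis of Proposition~\ref{pr:difference_series} (that $s$ and $s'$ never simultaneously take an infinite value at the same $t$) and the end-point conditions are in force, so that the subtractions defining $\bar{s}$ and the distributivity step used above remain valid throughout.
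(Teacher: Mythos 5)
Your proposal is correct and takes essentially the same route as the paper's own proof: both characterize $s''$ as the future running infimum $s''(t)=\bigoplus_{i\geq t}\bar{s}(i)$, collapse this infimum to a single period using the periodicity of $\bar{s}$ together with $\bar{\nu}>0$, and then derive the recurrence $s''(t+k\bar{\tau})=k\bar{\nu}+s''(t)$ for $t\geq\bar{t}_\textup{p}$ to conclude via Proposition~\ref{pr:periodic_series}. Your structural remark on why no correction factor $\kappa$ is needed (the infimum looks forward into the periodic regime rather than backward into the transient) matches the paper's treatment as well.
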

\begin{proof}
Series $s''$ is the least counter which is greater than or equal to $\bar{s}$.
In other words, $s''$ is the least series that satisfies the following two specifications:
\[
    s\succeq \bar{s}, \quad\mbox{and}\quad s(t)\succeq \bigoplus_{i\geq t}s(i) \quad \forall t\in\Z.
\]
Combining the two properties, we get that $s''$ is the least series satisfying
\[
    s''(t) \succeq \bigoplus_{i\geq t} \bar{s}(i)\quad \forall t\in\Z;
\]
equivalently, we can define $s''$ as the series such that
\[
    s''(t) = \bigoplus_{i\geq t} \bar{s}(i) = \bigoplus_{i\geq t}s(i) - s'(i) \quad \forall t\in\Z.
\]
Due to the periodicity of $\bar{s}$ and the fact that $\bar{\nu}>0$, for all $t\geq \bar{t}_\textup{p}$ we can write
\begin{align*}
    s''(t) &= \bigoplus_{i = t}^{t+\bar{\tau}-1} \bar{s}(i) \oplus \bigoplus_{i = t+\bar{\tau}}^{t+2\bar{\tau}-1} \bar{s}(i) \oplus \bigoplus_{i = t+2\bar{\tau}}^{t+3\bar{\tau}-1} \bar{s}(i) \oplus \ldots\\
    &= \bigoplus_{i = t}^{t+\bar{\tau}-1} \bar{s}(i) \oplus \cancel{\nu\left(\bigoplus_{i = t}^{t+\bar{\tau}-1} \bar{s}(i)\right)} \oplus \cancel{\nu^2 \left(\bigoplus_{i = t}^{t+\bar{\tau}-1} \bar{s}(i)\right)} \oplus \ldots \\
    &= \bigoplus_{i = t}^{t+\bar{\tau}-1} \bar{s}(i).
\end{align*}
Moreover, taking again $t\geq \bar{t}_\textup{p}$ and any integer $k\geq 0$, we also get
\begin{align*}
    s''(t+k\bar{\tau}) &= \bigoplus_{i = t + k\bar{\tau}}^{t + (k+1)\bar{\tau}-1} \bar{s}(i) = \bar{\nu}^{k} \left(\bigoplus_{i = t}^{t+\bar{\tau}-1} \bar{s}(i)\right) = \bar{\nu}^{k} s''(t),
\end{align*}
which, in standard algebra, reads $s''(t+k\bar{\tau}) = k\bar{\nu}+s''(t)$.
This proves that $s''$ is periodic, with periodic pattern starting at the latest at time $t''_\textup{p} = \bar{t}_\textup{p}$ and throughput $\frac{\nu''}{\tau''}$, where $\tau'' = \bar{\tau}$ and $\nu'' = \bar{\nu}$.
\end{proof}


As with the residual of the Hadamard product, the case $\bar{\nu} < 0$ can occur when computing the dual residual as well. 
Since in this case the values of $\bar{s}$ keep decreasing (in standard algebra) with each period, the only counter which is greater than or equal to $\bar{s}$ is $s_\top$. 
If $\bar{\nu} = 0$, then the result will be a polynomial whose last $\delta$-exponent is $+\infty$. 

\begin{example}
Let $s = 2\delta^2 \oplus 6\delta^3 (6\delta^8)^*$ and $s' = 1\delta^1 \oplus 5\delta^4 (3\delta^4)^*$. 
With $t''_{\textup{p}} = \max(3,4) = 4,\, \tau'' = \mbox{lcm}(8,4) = 8$ and $\nu'' = 0$, expanding up to $t''_\textup{p} + \tau'' - 1 = 11$ yields
\[ 
    \tilde{p} = 2\delta^2 \oplus 6\delta^3 \oplus 12\delta^{11} ,\quad \tilde{p}' = 1\delta^1 \oplus 5\delta^4 \oplus 8\delta^8 \oplus 11\delta^{11}.
\]
We obtain $\tilde{p} \odot^\flat\tilde{p}' = -3\delta^2 \oplus 1\delta^3 \oplus 1\delta^{11} = -3\delta^2 \oplus 1\delta^{11}$, so that the final result equals $s\odot^\flat s' = -3\delta^2 \oplus 1\delta^{11}(0\delta^8)^* = -3\delta^2 \oplus 1\delta^{+\infty}$.
\end{example}

The special case where one of the operands is a polynomial and the other is a periodic series needs to be examined as well. 
First of all, $p \odot^\flat s'$ results in $s_\top$ if $t_m = +\infty$, since $p$ in this case can also be represented as a periodic series with $\nu = 0$, which leads to $\nu'' < 0$. 
This case has been discussed above. 
If $t_m \neq +\infty$, then the result can also be represented by a polynomial, i.e., $\nu'' = 0$ will hold. 
This result is given by $p\odot^\flat\tilde{p}'$, where $\tilde{p}'$ is a polynomial obtained from expanding $s'$ up to $t_m$.

In the alternative case of $s \odot^\flat p'$, if $t_m' \neq +\infty$, the operation will be undefined, since there will be $+\infty$-coefficients in $p'$, of which the corresponding coefficients in $s$ are not $+\infty$ (unless $s = s_\varepsilon$ holds). 
If $t_m = +\infty$, then (similarly to the corresponding case for the Hadamard product), the series $s$ can be expanded up to $t''_\textup{p} + \tau'' - 1$, with $t''_\textup{p} = \max(T_1, 1 + t_{m-1}')$. 
The result can then be determined in the same way as previously described, and its periodicity will be given by the monomial $r'' = \nu \delta^{\tau}$.

\section{Software implementation tutorial}\label{se:4}

In this section, we show how to call the \CC~functions for computing the Hadamard product and its residuals using the library ETVO.

\subsection{Operations with monomials}

Consider monomials $r_1 = 5\delta^2$ and $r_2 = 3\delta^2$.
The following code produces the result of 
\[
   r_1\odot r_2 = 8\delta^2, \quad  r_1\odot^\sharp r_2 = 2\delta^{+\infty}, \quad \mbox{and}\quad r_1\odot^\flat r_2 = 2\delta^2.
\]

\lstinputlisting[label={lst:monomials},caption=Operations for monomials]{listings/monomials.cpp}

\subsection{Operations with polynomials}

Consider polynomials $p_1 = -3\delta^{-1} \oplus -1\delta^2 \oplus 3\delta^4$ and $p_2 = -1\delta^1 \oplus 0 \delta^3 \oplus 2\delta^4$.
The following code produces the result of
\[
    p_1 \odot p_2 = -4\delta^{-1} \oplus -2\delta^1 \oplus -1\delta^2 \oplus 3\delta^3 \oplus 5\delta^4
\]
\[
    p_1 \odot^\sharp p_2 = -2\delta^{-1} \oplus 0\delta^2 \oplus 3\delta^{+\infty},\quad
    \mbox{and} \quad p_1 \odot^\flat p_2 = -2\delta^{-1} \oplus -1\delta^2 \oplus 1\delta^4.
\]

\lstinputlisting[label={lst:polynomials},caption=Operations for polynomials]{listings/polynomials.cpp} 

\subsection{Operations with periodic series}

Consider series $s_1 = -1\delta^{0} \oplus (5\delta^2) (2\delta^1)^*$ and $s_2 = -1\delta^{-5} \oplus (3\delta^0) (1\delta^2)^*$.
The following code produces the result of
\[
    s_1 \odot s_2 = -2\delta^{-5} \oplus 2\delta^{0} \oplus (9\delta^2 \oplus 12\delta^3)(5\delta^2)^*
\]
\[
    s_1 \odot^\sharp s_2 = 0\delta^0 \oplus (1\delta^2 \oplus 2\delta^3)(3\delta^2)^*,\quad
    \mbox{and} \quad s_1 \odot^\flat s_2 = -4\delta^0 \oplus (1\delta^2 \oplus 2\delta^3)(3\delta^2)^*.
\]

\lstinputlisting[label={lst:series},caption=Operations for periodic series]{listings/series.cpp}

\bibliographystyle{plainnat}
\bibliography{references}

\end{document}